\documentclass[11pt]{article}
\usepackage{color}
\textwidth 15.5cm
\textheight 21.5cm
\hoffset -1.35cm
\voffset -1.25cm
\synctex=1



\usepackage{graphicx}
\usepackage{amsmath,amsfonts,amsthm}
\usepackage{amssymb,latexsym}
\usepackage{fixmath}
\usepackage{mathrsfs,amsbsy}
\usepackage{dsfont}
\usepackage{enumerate}
\usepackage{algorithm,algorithmic}
\usepackage{kbordermatrix}
\usepackage{xcolor}
\usepackage{soul,xcolor}
\setstcolor{red}
\def\bbC{\mathbb{C}}
\def\bbO{\mathbb{O}}
\def\bbR{\mathbb{R}}
\def\bbX{\mathbb{X}}

\def\cR{{\cal R}}
\def\cU{{\cal U}}
\def\cV{{\cal V}}
\def\cX{{\cal X}}

\def\sss{\scriptscriptstyle}

\DeclareMathOperator{\diag}{diag}
\DeclareMathOperator{\dist}{dist}

\DeclareMathOperator*{\opt}{opt}
\DeclareMathOperator{\ui}{ui}

\DeclareMathOperator{\rank}{rank}
\DeclareMathOperator{\re}{Re}
\DeclareMathOperator{\tr}{tr}

\DeclareMathOperator{\F}{F}

\DeclareMathOperator{\T}{T}
\DeclareMathOperator{\UI}{ui}

\def\wtd{\widetilde}
\def\what{\widehat}

\def\hadm{{\tt hadamard}}
\def\randn{{\tt randn}}
\def\orth{{\tt orth}}
\def\bestr{\mbox{\scriptsize\rm best-$r$}}

\newtheorem{theorem}{Theorem}[section]
\newtheorem{lemma}{Lemma}[section]

\theoremstyle{definition}

\newtheorem{remark}{Remark}[section]
\newtheorem{example}{Example}[section]

\numberwithin{equation}{section}
\numberwithin{figure}{section}
\numberwithin{table}{section}

\title{Variations of Orthonormal Basis Matrices of Subspaces}

\author{Zhongming Teng\thanks{%
   College of Computer and Information Science, Fujian Agriculture and Forestry University, Fuzhou, 350002, P. R. China.
Email: {\tt zhmteng@fafu.edu.cn}}
\and  Ren-Cang Li\thanks{%
   Department of Mathematics, University of Texas at Arlington, Arlington, TX 76019-0408,
USA.  E-mail: {\tt rcli@uta.edu}.  Supported in part by NSF DMS-2009689.}
}
\date{\today}

\begin{document}
\maketitle

\begin{abstract}
An orthonormal basis matrix $X$ of a subspace ${\cal X}$ is known not to be unique, unless there are some kinds of normalization requirements.
One of them is to require that $X^{\T}D$ is positive semi-definite, where $D$ is a
constant matrix of apt size. It is  a natural one in multi-view subspace learning
models in which $X$ serves as a projection matrix and is determined by
a maximization problem over the Stiefel manifold whose objective function contains
and increases with $\tr(X^{\T}D)$.
This paper is concerned with bounding the change in orthonormal basis matrix $X$ as subspace ${\cal X}$ varies
under the requirement that $X^{\T}D$ stays positive semi-definite.
The results are useful in
convergence analysis of the NEPv approach (nonlinear eigenvalue problem
with eigenvector dependency) to solve the maximization problem.
\end{abstract}

\medskip
{\small
{\bf Key words.} Unitarily invariant norm, canonical angle, subspace,
orthonormal basis matrix,  optimization on Stiefel manifold.

\medskip
{\bf Mathematics subject classifications (2010).} 15A45, 65F35
}

\section{Introduction}\label{sec:intro}
Recently in \cite{luli:2022}, the following optimization problem over the Stiefel manifold $\bbO^{n\times k}$
\begin{equation}\label{eq:OptSTM}
\max_{X\in\bbO^{n\times k}} f(X)\quad\mbox{with}\,\, f(X):=\phi(X)+\psi(X)\times\tr(X^{\T}D)
\end{equation}
is  considered, where $D$ is a constant matrix, the Stiefel manifold
$$
\bbO^{n\times k}=\{X\in\bbR^{n\times k}\,:\, X^{\T}X=I_k\,\,(\mbox{the $k\times k$ identity matrix})\},
$$
$\phi$ and $\psi$ are two unitarily invariant functions in the sense
that $\phi(XQ)\equiv\phi(X)$ and $\psi(XQ)\equiv\psi(X)$ for any $Q\in\bbO^{k\times k}$ and $\psi(X)>0$.
It is an abstraction of a few concrete problems arising from subspace learning
\cite{wazl:2022a,wazl:2022,zhys:2020,zhwb:2022},
where objective functions  contain $\tr(X^{\T}D)$ and
increase with $\tr(X^{\T}D)$.
Along the lines of earlier research in \cite{wazl:2022,zhys:2020,zhwb:2022},
the authors of \cite{luli:2022} started by transforming the KKT condition for \eqref{eq:OptSTM}
into an NEPv (nonlinear eigenvalue problem with eigenvector dependency)
\begin{equation}\label{eq:NEPv-form}
H(X)X=X\Omega,\quad X\in\bbO^{n\times k},
\end{equation}
where $H(X)$, dependent of $X$, is symmetric. NEPv of this form are not new, however, and in fact before
\cite{zhln:2010,zhln:2013} where orthogonal linear discriminant analysis (OLDA)
was first solved through NEPv, they mostly come from solving discretized
Kohn-Sham equations from the density functional theory \cite{hoko:64,kosh:1965,robl:2012,yaml:2009}.
Numerically, NEPv~\eqref{eq:NEPv-form} is often solved by the so-called {\em self-consistent-field\/} (SCF) iteration.
In \cite{luli:2022}, NEPv~\eqref{eq:NEPv-form} is solved by an SCF-type iteration:
given $X_0\in\bbO^{n\times k}$,
\begin{equation}\label{eq:SCF1}
\mbox{iteratively solve $H(X_{i-1})\what X_i=\what X_i\Omega_i$ for $\what X_i$ which is postprocessed to get $X_i$},
\end{equation}
until convergence, where the postprocessing yields $X_i=\what X_iQ_i$ for some $Q_i\in\bbO^{k\times k}$ such that
$X_i^{\T}D\succeq 0$ (positive semidefinite). $Q_i$ is often taken to be an orthogonal polar factor of
$X_i^{\T}D$ \cite{luli:2022,wazl:2022,zhys:2020,zhwb:2022},
owing to the fact that $f(X)$  is monotonically increasing
in $\tr(X^{\T}D)$ and that $Q_*\in\bbO^{k\times k}$ such that
$(XQ_*)^{\T}D=Q_*^{\T}(X^{\T}D)\succeq 0$ (positive semidefinite) ensures \cite{luli:2022,wazl:2022}
\begin{equation}\label{eq:tr-incr}
\tr([XQ_*]^{\T}D)=\tr(Q_*^{\T}[X^{\T}D])=\max_{Q\in\bbO^{k\times k}}\tr(Q^{\T}[X^{\T}D])\ge\tr( X^{\T}D),
\end{equation}
and the inequality is strict if $X^{\T}D\not\succeq 0$.

The SCF-type iteration \eqref{eq:SCF1} differs from
the classical SCF  for solving discretized
Kohn-Sham equations in its postprocessing from $\what X_i$ to $X_i$, which is not needed in the classical SCF for NEPv
that is unitarily invariant, i.e., $H(XQ)\equiv H(X)$ for any $Q\in\bbO^{k\times k}$. Before \cite{luli:2022},
SCF-type \eqref{eq:SCF1} had appeared in \cite{wazl:2022,zhys:2020,zhwb:2022}. Often indiscriminately, we use SCF to
refer to both the classical SCF and SCF-type iteration when no confusion arises.

An immediate consequence of \eqref{eq:tr-incr} is that $X_*^{\T}D\succeq 0$ for  any maximizer $X_*$
of maximization problem~\eqref{eq:OptSTM}. Another important characterization of maximizer $X_*$
is that \cite[Theorem 3.1]{luli:2022}
\begin{equation}\label{cond:rank}
\rank(X_*^{\T}D)=\rank(D).
\end{equation}
As a result, for any $X\in\bbO^{n\times k}$ such that the column space of $X$, denoted by  $\cR(X)$, is sufficiently close to $\cR(X_*)$, we have
$\rank(X^{\T}D)=\rank(D)$ \cite[Lemma 5.1]{luli:2022} which implies the continuity
of the canonical orthogonal polar factor
of $X^{\T}D$ for $\cR(X)$ near $\cR(X_*)$ \cite{high:2008,li:1993b}.

One of the key issues for SCF-type iteration \eqref{eq:SCF1}, as an iterative scheme, is whether
the generated sequence of approximations converge to the intended target. In the case when
optimization problem~\eqref{eq:OptSTM} is involved, that target is one of its maximizers. Because of technical
limitation, existing results on convergence are really about convergence-in-subspace, i.e., the convergence of
$\cR(X_i)$ to some $k$-dimensional subspace $\cX_*:=\cR(X_*)$ with exact $X_*$ unknown of course.
In other words, existing results may guarantee that $\cR(X_i)$ converges to
$\cX_*$ and produce estimates on the distance between subspaces $\cR(X_i)$ and
$\cX_*$ at convergence, but do not yield bounds on $\|X_i-X_*\|$ where $\|\cdot\|$ is some matrix norm.
In the case of OLDA or any objective function $f$ that is
unitarily invariant, this is the best  we can do because if $X_*$ is an optimizer then so is $X_*Q$ for any
$Q\in\bbO^{k\times k}$, but for $f$ as in \eqref{eq:OptSTM}, the optimizer $X_*$
is provably unique, provided $\rank(X_*^{\T}D)=k$,  within the orbit
\begin{equation}\label{eq:orbit4X*}
\bbX_*:=\{X_*Q\,:\, Q\in\bbO^{k\times k}\}
\end{equation}
whose elements share the same subspace $\cX_*=\cR(X_*)$, but $X_*$ is only
partially unique when $\rank(X_*^{\T}D)<k$ \cite{wazl:2022,luli:2022}. Notice that
$\rank(X^{\T}D)$ is a constant for all $X\in\bbX_*$, independent of any particular orthonormal basis matrix for $\cX_*$.
In view of this discussion, we may regard $D$ as some kind of decider that picks up particular $X_*$ from the orbit
\eqref{eq:orbit4X*}.

The goal of this paper is to answer the following mathematical question:
\begin{equation}\label{eq:TheQuestion}
\framebox{
\parbox{12.7cm}{Given two $k$-dimensional subspaces $\cX$ and $\wtd\cX$ of $\bbR^n$,
let $X,\,\wtd X\in\bbO^{n\times k}$ be their orthonormal basis matrices, respectively,
such that $X^{\T}D,\,\wtd X^{\T}D\succeq 0$, and assume $\rank(X^{\T}D)=\rank(\wtd X^{\T}D)$.
How do we bound the difference between $X$ and $\wtd X$ in terms of
the difference between the subspaces $\cX$ and $\wtd\cX$?}
}
\end{equation}
As to the issue raised moments ago for the convergence of $X_i$ in SCF, our main result can be
used to
bound $\|X_i-X_*\|$ in terms of the distance between the subspaces $\cR(X_i)$ and
$\cX_*$, by letting $X=X_*$ and $\wtd X=X_i$. The first condition $X_i^{\T}D\succeq 0$ holds by design and
$X_*^{\T}D\succeq 0$ is a necessary condition for a maximizer, and
$\rank(X_i^{\T}D)=\rank(X_*^{\T}D)$ near convergence is due to \eqref{cond:rank} of
\cite[Lemma 5.1]{luli:2022}

{\bf Notation.}
$\bbR^{m\times n}$ is the set of $m\times n$ real matrices, $\bbR^n=\bbR^{n\times 1}$ and $\bbR=\bbR^1$. $I_n\in\bbR^{n\times n}$ is
the identity matrix.
For $B\in\bbR^{m\times n}$,
$\cR(B)$ is the column subspace, spanned by its columns, and its singular values are denoted by
$\sigma_i(B)$ for $i=1,\ldots,\min(m,n)$ arranged in the nonincreasing order,
and
$$
\|B\|_2=\sigma_1(B),\,\,
\|B\|_{\F}=\sqrt{\sum_{i=1}^{{\rm rank}(B)}[\sigma_i(B)]^2},\,\,
\|B\|_{\tr}=\sum_{i=1}^{{\rm rank}(B)}\sigma_i(B)
$$
are the spectral, Frobenius, and trace norms of $B$, respectively.
$B^{\T}$ is the transpose of $B$. The trace norm is also known as the nuclear norm.
For a symmetric $A\in\bbR^{n\times n}$,
$A\succ 0 ~(\succeq 0)$ means that $A$ is positive definite (semi-definite).
MATLAB-like notation is used to access the entries of a matrix or vector:
$X_{(i:j,k:l)}$ denotes the submatrix of a matrix $X$, consisting of the intersections of
rows $i$ to $j$ and columns $k$ to $l$, and when $i : j$ is replaced by $:$, it means all rows.

\section{Preliminaries}
In this section, we collect a few known results that we will need in our later developments.

\subsection{Canonical angles between subspaces}
Given two $k$-dimensional subspaces $\cX$ and $\wtd\cX$ of $\bbR^{n}$,
let $X\in\bbO^{n\times k}$ and $\wtd X\in\bbO^{n\times k}$ be their orthonormal basis matrices,
respectively, i.e.,
\[
X^{\T}X=I_k,\ \cR(X)=\cX\quad\mbox{and}\quad \wtd X^{\T}\wtd X=I_k,\ \cR(\wtd X)=\wtd \cX.
\]
Denote by $\omega_i$ for $1\le i\le k$ the singular values of $X^{\T}\wtd X$ in descending order, i.e.,
$\omega_1\ge\dots\ge\omega_k$. The $k$ canonical angles $\theta_i(\cX,\wtd\cX)$ between $\cX$ and $\wtd\cX$
are defined as
\[
0\le\theta_i(\cX,\wtd\cX):=\arccos(\omega_{k-i+1})\le\frac{\pi}{2},\quad\mbox{for\ $1\le i\le k$} .
\]
Set
\begin{equation}\label{eq:angle-def}
\Theta(\cX,\wtd\cX)=\diag\left(\theta_1(\cX,\wtd\cX),\dots,\theta_k(\cX,\wtd\cX)\right)\in\bbR^{k\times k}.
\end{equation}
It can be seen that the angle matrix $\Theta(\cX,\wtd\cX)$ in~\eqref{eq:angle-def} is independent of
choosing orthonormal basis matrices of $\cX$ and $\wtd\cX$.

In this paper, any unitarily invariant norm $\|\cdot\|_{\UI}$ \cite{stsu:1990} we refer to is assumed to be dimension-free in the sense
that it can be applied to matrices of any size consistently such as the matrix spectral and Frobenius norm.
Less stringently, we may limit our unitarily invariant norms that are induced by
a symmetric gauge function $\Phi$ on $\bbR^N$ \cite[section~II.4]{stsu:1990} with sufficiently large $N$ such that
all matrices $B$ of interest have no more than $N$ rows and columns, and then
we let \cite[p.79]{stsu:1990}
$$
\|B\|_{\UI}=\Phi(\sigma_1(B),\ldots,\sigma_r(B), 0,\ldots,0),
$$
i.e., appending $0$ to the set of singular values of $B$ to make $N$ of them.
It is known that for matrices $A$, $B$ and $C$ of compatible size
we have
\begin{equation}\label{eq:UI-2:consistent}
\|ABC\|_{\UI}\le\|A\|_2\|B\|_{\UI}\|C\|_2.
\end{equation}

Sun~\cite[p.95]{sun:1987} proved that for
any unitarily invariant norm $\|\cdot\|_{\UI}$,
$\|\sin\Theta(\cX,{\cal Y})\|_{\UI}$
defines a unitarily invariant metric on the Grassmann manifold
consisting of all $k$-dimensional subspaces
of $\bbR^n$. A convenient way to work with $\|\sin\Theta(\cX,{\cal Y})\|_{\UI}$ is as follows.
Let $X_{\perp},\,\wtd X_{\perp}\in\bbO^{n\times (n-k)}$
such that $[X, X_{\perp}]\in\bbO^{n\times n}$ and $[\wtd X, \wtd X_{\perp}]\in\bbO^{n\times n}$, respectively. Then
\begin{equation}\label{eq:def-comp-sin}
\|\sin\Theta(\cX,\wtd\cX)\|_{\UI}=\|X_{\perp}^{\T}\wtd X\|_{\UI}
=\|\wtd X_{\perp}^{\T}X\|_{\UI}.
\end{equation}

\begin{lemma}[{\cite[Lemma~4.1]{zhli:2014b}}]\label{le:max-angle}
There exists an orthogonal matrix $Q\in\bbO^{k\times k}$ such that
\begin{equation}\label{eq:angle1}
    \|\sin\Theta(\cX,\wtd\cX)\|_{\UI}\le\|X-\wtd XQ\|_{\UI}\le\sqrt 2\,\|\sin\Theta(\cX,\wtd\cX)\|_{\UI}.
\end{equation}
\end{lemma}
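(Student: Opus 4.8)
The plan is to express everything through the singular values of $X^{\T}\wtd X$, whose cosines are the canonical angles, and to take $Q$ to be the orthogonal factor that solves the orthogonal Procrustes problem $\min_{Q\in\bbO^{k\times k}}\|X-\wtd XQ\|_{\F}$. Concretely, let $X^{\T}\wtd X=U\Sigma V^{\T}$ be an SVD with $U,V\in\bbO^{k\times k}$ and $\Sigma=\diag(\omega_1,\dots,\omega_k)$, and put $Q:=VU^{\T}\in\bbO^{k\times k}$. Up to a permutation of its diagonal entries, $\Sigma=\cos\Theta(\cX,\wtd\cX)$, and since a unitarily invariant norm is insensitive to permutations of singular values, this reordering will be harmless.

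For the left inequality I would observe that it holds for \emph{every} $Q\in\bbO^{k\times k}$, the chosen one included. With $X_\perp$ as in \eqref{eq:def-comp-sin}, so that $X_\perp^{\T}X=0$ and $\|X_\perp\|_2=1$, the submultiplicativity \eqref{eq:UI-2:consistent} together with the orthogonal invariance $\|MQ\|_{\UI}=\|M\|_{\UI}$ gives
\[
\|X-\wtd XQ\|_{\UI}\ge\|X_\perp^{\T}(X-\wtd XQ)\|_{\UI}=\|X_\perp^{\T}\wtd XQ\|_{\UI}=\|X_\perp^{\T}\wtd X\|_{\UI}=\|\sin\Theta(\cX,\wtd\cX)\|_{\UI}.
\]

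For the right inequality I would use the explicit $Q=VU^{\T}$. A direct computation yields $X^{\T}\wtd XQ=Q^{\T}\wtd X^{\T}X=U\Sigma U^{\T}$, hence
\[
(X-\wtd XQ)^{\T}(X-\wtd XQ)=2I_k-X^{\T}\wtd XQ-Q^{\T}\wtd X^{\T}X=2U(I_k-\Sigma)U^{\T},
\]
so the singular values of $X-\wtd XQ$ are $\sqrt{2(1-\omega_i)}=2\sin(\theta_i/2)$, where $\theta_i:=\theta_i(\cX,\wtd\cX)$, using $1-\cos\theta=2\sin^2(\theta/2)$ and $\theta_i/2\in[0,\pi/4]$. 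Since $\sin\theta=2\sin(\theta/2)\cos(\theta/2)$ with $\cos(\theta/2)\in[1/\sqrt2,1]$ on $[0,\pi/2]$, we obtain the entrywise bounds $\sin\theta_i\le 2\sin(\theta_i/2)\le\sqrt2\,\sin\theta_i$. Feeding the two singular-value vectors into the symmetric gauge function generating $\|\cdot\|_{\UI}$ and using its monotonicity then gives $\|X-\wtd XQ\|_{\UI}\le\sqrt2\,\|\sin\Theta(\cX,\wtd\cX)\|_{\UI}$ (and re-derives the lower bound as well).

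The one step needing care is the passage from the entrywise trigonometric inequalities to the norm inequalities: it relies on representing $\|\cdot\|_{\UI}$ by a symmetric gauge function and on the fact that such a function is nondecreasing in the absolute value of each coordinate. One cannot instead square the norm, since $\|A\|_{\UI}^2$ is in general not a unitarily invariant norm of $A^{\T}A$. Everything else is just bookkeeping with the SVD of $X^{\T}\wtd X$ and the half-angle identity.
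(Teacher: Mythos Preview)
Your proof is correct. The paper does not supply its own argument for this lemma; it is quoted verbatim from \cite[Lemma~4.1]{zhli:2014b}, so there is nothing to compare against here. Your approach---taking $Q$ to be the Procrustes minimizer $VU^{\T}$ from the SVD of $X^{\T}\wtd X$, reading off the singular values $2\sin(\theta_i/2)$ of $X-\wtd XQ$, and then using the half-angle bounds $\sin\theta_i\le 2\sin(\theta_i/2)\le\sqrt2\,\sin\theta_i$ together with monotonicity of the symmetric gauge function---is the standard route and matches what one finds in the cited source. The lower bound via $X_\perp^{\T}$ is likewise the usual one and, as you note, holds for every $Q$, not just the chosen one.
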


\subsection{SVD Perturbation}
For any matrix $B$ of apt size, we will use $B_{\bestr}$ to denote its {\em best rank-$r$ approximation\/}
obtained by zeroing out all of its singular values except the first $r$ largest ones in its SVD. It can be shown,
using Fan's theorem \cite[p.86]{stsu:1990}, that
for any unitarily invariant norm $\|\,\cdot\,\|_{\UI}$, $\|(\,\cdot\,)_{\bestr}\|_{\UI}$ for $r\ge 1$ is also
a unitarily invariant norm. The consistency inequalities in \eqref{eq:UI-2:consistent} can be sharpened a little:
\begin{equation}\label{eq:UI-2:consistent'}
\|ABC\|_{\UI}\le\| A\|_2\|B_{\bestr}\|_{\UI}\|C\|_2\quad\mbox{if $\min\{\rank(A),\rank(B),\rank(C)\}\le r$}.
\end{equation}
The next lemma is a corollary of the classical Wedin's result of
\cite[(3.1)]{wedi:1972} and \eqref{eq:UI-2:consistent'}.

\begin{lemma}\label{lm:SVD-pert:wedin}
Let $B,\,\wtd B=B+F\in\bbR^{m\times n}$ such that $\rank(B)=\rank(\wtd B)=r$ and let
their singular value decompositions be
\begin{equation}\label{eq:Bsvd}
    B=U\Sigma V^{\T}\quad\mbox{and}\quad
\wtd B=\wtd U\wtd\Sigma \wtd V^{\T},
\end{equation}
where $\Sigma_{(1:r,1:r)}\succ 0$ and $\wtd\Sigma_{(1:r,1:r)}\succ 0$.
Then we have
\begin{subequations}\label{eq:sin}
\begin{align}
\max\left\{\|\sin\Theta(\cU,\wtd\cU)\|_{\UI},\,\|\sin\Theta(\cV,\wtd\cV)\|_{\UI}\right\}
                &\le\frac{\|F_{\bestr}\|_{\UI}}{\max\{\sigma_r(B),\sigma_r(\wtd B)\}} \label{eq:sin-a} \\
                &\le\frac{\|F\|_{\UI}}{\max\{\sigma_r(B),\sigma_r(\wtd B)\}}. \label{eq:sin-b}
\end{align}
\end{subequations}
where $\cU=\cR(U_{(:,1:r)})$,
$\wtd\cU=\cR(\wtd U_{(:,1:r)})$,
$\cV=\cR(V_{(:,1:r)})$, and
$\wtd\cV=\cR(\wtd V_{(:,1:r)})$.
\end{lemma}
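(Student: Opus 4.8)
The plan is to reduce Lemma~\ref{lm:SVD-pert:wedin} to the classical result of Wedin~\cite[(3.1)]{wedi:1972}, which already supplies a bound of the required shape but with $\|F\|_{\UI}$ in the numerator and only for the spectral and Frobenius norms stated explicitly; the work is to (i) upgrade to a general unitarily invariant norm and (ii) sharpen $\|F\|_{\UI}$ to $\|F_{\bestr}\|_{\UI}$. First I would recall the precise statement of Wedin's residual perturbation bound. Partition the SVDs in~\eqref{eq:Bsvd} conformally so that $U_1=U_{(:,1:r)}$, $V_1=V_{(:,1:r)}$ carry the nonzero singular values of $B$, and similarly $\wtd U_1,\wtd V_1$ for $\wtd B$; since $\rank(B)=\rank(\wtd B)=r$, the complementary blocks $U_2,V_2,\wtd U_2,\wtd V_2$ span the respective kernels. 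Wedin's identity expresses the residuals $\wtd U_1^{\T}\!B V_2$-type quantities, which govern $\sin\Theta(\cU,\wtd\cU)$ and $\sin\Theta(\cV,\wtd\cV)$ via~\eqref{eq:def-comp-sin}, in terms of $F$ acting between the leading and trailing singular subspaces; because the trailing singular values of both $B$ and $\wtd B$ are exactly $0$, the generic ``gap'' in Wedin's theorem collapses to simply $\max\{\sigma_r(B),\sigma_r(\wtd B)\}$ in the denominator, and the two residual terms coincide. This already yields the inequality with $\|F\|_{\UI}$ in the numerator for any unitarily invariant norm, since Wedin's proof is an algebraic identity of the form $\sin\Theta \cdot (\text{positive diagonal}) = (\text{orthogonal projections of } F)$ to which one applies a unitarily invariant norm together with the consistency inequality~\eqref{eq:UI-2:consistent}.

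The second step is the sharpening to $\|F_{\bestr}\|_{\UI}$, which is exactly where~\eqref{eq:UI-2:consistent'} enters. The point is that in Wedin's identity the perturbation $F$ always appears sandwiched between projections onto subspaces of dimension $r$ (or $n-r$), e.g.\ in a product $P F R$ where $\rank(P)\le r$ or $\rank(R)\le r$; concretely the quantities controlling $\sin\Theta(\cU,\wtd\cU)$ and $\sin\Theta(\cV,\wtd\cV)$ are, up to orthogonal factors, $\wtd U_2^{\T} F V_1$ and $\wtd U_1^{\T} F V_2$ (or the analogous pair with tildes swapped), and in each of these $F$ is multiplied by a factor of rank at most $r$. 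Applying~\eqref{eq:UI-2:consistent'} in place of~\eqref{eq:UI-2:consistent} then replaces $\|F\|_{\UI}$ by $\|F_{\bestr}\|_{\UI}$, giving~\eqref{eq:sin-a}; the final inequality~\eqref{eq:sin-b} is immediate since $\|F_{\bestr}\|_{\UI}\le\|F\|_{\UI}$, which follows from Fan's theorem as already noted in the remark preceding the lemma.

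The main obstacle I anticipate is purely bookkeeping: writing Wedin's bound in the degenerate situation where both matrices have exactly the trailing $m-r$ and $n-r$ singular values equal to zero, and checking that the two subspace angles are controlled simultaneously so that the single denominator $\max\{\sigma_r(B),\sigma_r(\wtd B)\}$ is legitimate rather than the more cautious $\min$ of two separate gaps. One must be careful that $\max\{\sigma_r(B),\sigma_r(\wtd B)\}>0$, which is guaranteed by the hypotheses $\Sigma_{(1:r,1:r)}\succ 0$ and $\wtd\Sigma_{(1:r,1:r)}\succ 0$, so the bound is never vacuous. No genuinely new inequality is needed beyond~\eqref{eq:UI-2:consistent'}; the lemma is, as stated, a corollary, and the proof is essentially ``quote Wedin, then insert $(\cdot)_{\bestr}$ via the rank-aware consistency inequality.''
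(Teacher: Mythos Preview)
Your proposal is correct and follows essentially the same route as the paper: quote Wedin's residual bound \cite[(3.1)]{wedi:1972} for the rank-$r$ block, then replace the consistency inequality~\eqref{eq:UI-2:consistent} by the rank-aware version~\eqref{eq:UI-2:consistent'} when bounding the residuals $R=(B-\wtd B)\wtd V_{(:,1:r)}$ and $S=(B-\wtd B)^{\T}\wtd U_{(:,1:r)}$ to obtain $\|F_{\bestr}\|_{\UI}$. The $\max\{\sigma_r(B),\sigma_r(\wtd B)\}$ in the denominator, which you flagged as the main bookkeeping point, is obtained in the paper exactly as you suspect it should be: apply Wedin once to get $\sigma_r(\wtd B)$ in the denominator, swap the roles of $B$ and $\wtd B$ to get $\sigma_r(B)$, and keep the better of the two.
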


\begin{proof}
Let
\begin{align*}
R&:=B\wtd V_{(:,1:r)}-\wtd U_{(:,1:r)}\wtd\Sigma_{(1:r,1:r)}=(B-\wtd B)\wtd V_{(:,1:r)},\\
S&:=B^{\T}\wtd U_{(:,1:r)}-\wtd V_{(:,1:r)}\wtd\Sigma_{(1:r,1:r)}=(B-\wtd B)^{\T}\wtd U_{(:,1:r)}.
\end{align*}
By \eqref{eq:UI-2:consistent'}, we get
$\|R\|_{\UI}\le\|F_{\bestr}\|_{\UI}$ and $\|S\|_{\UI}\le\|F_{\bestr}\|_{\UI}$.
Hence, with the help of the classical Wedin's result of \cite[(3.1)]{wedi:1972} (see also \cite[Fact 4, p.21-7]{li:2014HLA})
for the case, we have
\begin{align}
\max\left\{\|\sin\Theta(\cU,\wtd\cU)\|_{\UI},\,\|\sin\Theta(\cV,\wtd\cV)\|_{\UI}\right\}
  &\le\frac{\max\{\|R\|_{\UI},\|S\|_{\UI}\}}{\sigma_r(\wtd B)} \nonumber \\
  &\le\frac{\|F_{\bestr}\|_{\UI}}{\sigma_r(\wtd B)}. \label{eq:wedin}
\end{align}
Switching the roles of $B$ and $\wtd B$ in \eqref{eq:wedin}, we get
\begin{equation}\label{eq:wedin'}
\max\left\{\|\sin\Theta(\cU,\wtd\cU)\|_{\UI},\,\|\sin\Theta(\cV,\wtd\cV)\|_{\UI}\right\}
\le\frac{\|F_{\bestr}\|_{\UI}}{\sigma_r(B)}.
\end{equation}
Inequalities in \eqref{eq:sin} are the consequences of \eqref{eq:wedin} and \eqref{eq:wedin'}.
\end{proof}

\subsection{Polar decomposition}
Any $B\in\bbR^{n\times m}$ $(n\ge m)$ can be decomposed as $B=QH$, called a polar decomposition \cite[p.449]{hojo:2013},
where $Q\in\bbO^{n\times m}$ and $H=(B^{\T}B)^{1/2}\succeq 0$ is the unique positive semidefinite square root of $B^{\T}B$.
It is known that orthogonal factor $Q$  is unique if and only if $\rank(B)=m$ \cite{li:2014HLA}. When $\rank(B)<m$,
there is  the so-called {\em canonical polar decomposition\/} $B=QH$ in which $Q\in\bbR^{n\times m}$
is a partial isometry and satisfies $\cR(Q^{\T})=\cR(H)$ and again $H=(B^{\T}B)^{1/2}$.
In the canonical polar decomposition, $Q$ is unique
(see \cite[p.220]{begr:2003}, \cite[chapter~8]{high:2008}, \cite{li:1993b}).


\begin{lemma}[{\cite[Theorem~1]{li:1995}}, {\cite[Theorem~3.4]{lisu:2003}}] \label{le:polar}
Suppose that $B\in\bbR^{n\times m}$  ($n\ge m$) is perturbed to $\wtd B$
such that $\rank(\wtd B)=\rank(B)=r$. Let the SVD of $B$ be given by
\begin{equation}\label{eq:svd-B}
B=\kbordermatrix{ &\sss r &\sss n-r\\
                                 & U_1 & U_2}\times
                      \kbordermatrix{ &\sss r &\sss m-r \\
                                    \sss r & \Sigma_1 & \\
                                    \sss n-r & & 0}\times
                      \kbordermatrix{ &\\
                               \sss r  & V_1^{\T} \\
                               \sss m-r  & V_2^{\T}},
\end{equation}
where $r=\rank(B)$, and similarly
the SVD of $\wtd B$ takes the form as in \eqref{eq:svd-B} except with
a tilde on each of the symbols there. Then
\begin{equation}\label{eq:QtQ}
 Q= U_1 V_1^{\T}
\quad\mbox{and}\quad
\wtd Q=\wtd U_1\wtd V_1^{\T}
\end{equation}
are
the unique partial isometry factors of the canonical polar decompositions
of $B$ and $\wtd B$, respectively, and
\begin{equation}\label{eq:polar-iu}
\|Q-\wtd Q\|_{\UI}\le
\begin{cases}
    \dfrac{2}{\sigma_n(B)+\sigma_n(\wtd B)}\,\|\wtd B-B\|_{\UI}, &\mbox{if $r=n=m$}; \\[1em]
    \left(\dfrac{2}{\sigma_r(B)+\sigma_r(\wtd B)}+\dfrac{2}{\max\{\sigma_r(B),\sigma_r(\wtd B)\}}\right)\,
               \|\wtd B-B\|_{\UI},
    &\mbox{otherwise}.
\end{cases}
\end{equation}
This inequality can be improved for the matrix spectral and Frobenius norm
in the case when $n>m$ or $r<n$:
\begin{subequations}\label{eq:polar-fnorm-2norm}
\begin{align}
\|Q-\wtd Q\|_{\F}
           &\le\dfrac{2}{\sigma_r(B)+\sigma_r(\wtd B)}\,\|\wtd B-B\|_{\F}, \label{eq:polar-fnorm} \\
\|Q-\wtd Q\|_2
          &\le \sqrt{\dfrac{4}{\left[\sigma_r(B)+\sigma_r(\wtd B)\right]^2}
                     +\dfrac{2}{\left[\max\{\sigma_r(B),\sigma_r(\wtd B)\}\right]^2}}\,\|\wtd B-B\|_2. \label{eq:polar-2norm}
\end{align}
\end{subequations}
\end{lemma}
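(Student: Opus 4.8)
The plan is to decompose $Q-\wtd Q$ into an ``in-subspace rotation'' piece plus two ``subspace tilt'' pieces. Introduce the orthogonal projectors $P=U_1U_1^\T$, $\Pi=V_1V_1^\T$, $\wtd P=\wtd U_1\wtd U_1^\T$, $\wtd\Pi=\wtd V_1\wtd V_1^\T$. Using $PQ=Q=Q\Pi$, $\wtd P\wtd Q=\wtd Q$, $Q(I-\Pi)=0$ and $(I-\wtd P)\wtd Q=0$, a one-line reduction gives
\[
Q-\wtd Q=\underbrace{(I-\wtd P)Q}_{T_1}-\underbrace{\wtd Q(I-\Pi)}_{T_2}+\underbrace{\wtd U_1(\wtd U_1^\T U_1-\wtd V_1^\T V_1)V_1^\T}_{T_3}.
\]
Both $T_1$ and $T_2$ vanish precisely when $r=n=m$, which is why the first branch of \eqref{eq:polar-iu} carries a single term.

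Next I bound the three pieces. Since right-multiplying by $V_1^\T$ (resp.\ left-multiplying by $\wtd U_1$) preserves singular values, $\|T_1\|_{\UI}=\|(I-\wtd P)U_1\|_{\UI}=\|\sin\Theta(\cR(U_1),\cR(\wtd U_1))\|_{\UI}$ and $\|T_2\|_{\UI}=\|\wtd V_1^\T(I-\Pi)\|_{\UI}=\|\sin\Theta(\cR(V_1),\cR(\wtd V_1))\|_{\UI}$, and Lemma~\ref{lm:SVD-pert:wedin} bounds each of these by $\|(\wtd B-B)_{\bestr}\|_{\UI}/\max\{\sigma_r(B),\sigma_r(\wtd B)\}\le\|\wtd B-B\|_{\UI}/\max\{\sigma_r(B),\sigma_r(\wtd B)\}$. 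For $T_3$, put $Z=\wtd U_1^\T U_1-\wtd V_1^\T V_1$ and substitute the reduced-SVD identities $BV_1=U_1\Sigma_1$, $B^\T U_1=V_1\Sigma_1$, $\wtd U_1^\T\wtd B=\wtd\Sigma_1\wtd V_1^\T$, $\wtd V_1^\T\wtd B^\T=\wtd\Sigma_1\wtd U_1^\T$ into $\wtd\Sigma_1Z$ and $Z\Sigma_1$; collecting terms gives the Sylvester equation
\[
\wtd\Sigma_1Z+Z\Sigma_1=\wtd V_1^\T(\wtd B-B)^\T U_1-\wtd U_1^\T(\wtd B-B)V_1 .
\]
Because $\wtd\Sigma_1\succeq\sigma_r(\wtd B)I_r\succ0$ and $\Sigma_1\succeq\sigma_r(B)I_r\succ0$, the standard norm bound for such an equation (via $Z=\int_0^\infty e^{-t\wtd\Sigma_1}[\,\cdot\,]e^{-t\Sigma_1}\,dt$ together with \eqref{eq:UI-2:consistent}) and \eqref{eq:UI-2:consistent'} applied to the two right-hand side terms (whose outer factors all have rank $\le r$) yield $\|T_3\|_{\UI}=\|Z\|_{\UI}\le 2\|(\wtd B-B)_{\bestr}\|_{\UI}/(\sigma_r(B)+\sigma_r(\wtd B))$. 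The triangle inequality now gives the ``otherwise'' case of \eqref{eq:polar-iu}, the first branch being the situation $T_1=T_2=0$, where $\wtd U_1$ and $V_1$ are square and $(\wtd B-B)_{\bestr}=\wtd B-B$.

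For the refinements \eqref{eq:polar-fnorm-2norm} I would argue separately. The Frobenius bound comes from a coercivity estimate: expanding $\langle\wtd B-B,\wtd Q-Q\rangle$ and $\|Q-\wtd Q\|_{\F}^2$ in the SVDs (with $H=V_1\Sigma_1V_1^\T$, $\wtd H=\wtd V_1\wtd\Sigma_1\wtd V_1^\T$) gives $\langle\wtd B-B,\wtd Q-Q\rangle=\tr(\Sigma_1(I-NM))+\tr(\wtd\Sigma_1(I-MN))$ and $\|Q-\wtd Q\|_{\F}^2=2\,\tr(I_r-MN)$, where $M=\wtd U_1^\T U_1$, $N=V_1^\T\wtd V_1$; since $\|M\|_2,\|N\|_2\le1$ the diagonal entries of $MN$ and $NM$ are at most $1$, and $\sigma_i(B)\ge\sigma_r(B)$, $\sigma_i(\wtd B)\ge\sigma_r(\wtd B)$ for $i\le r$, so a termwise comparison gives $\langle\wtd B-B,\wtd Q-Q\rangle\ge\frac{\sigma_r(B)+\sigma_r(\wtd B)}{2}\|Q-\wtd Q\|_{\F}^2$, whence Cauchy--Schwarz yields \eqref{eq:polar-fnorm}. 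For the spectral bound, note that in the split $T_1$ has column space in $\cR(\wtd U_1)^{\perp}$ while $T_2,T_3$ have column space in $\cR(\wtd U_1)$, and $T_2$ has row space in $\cR(V_1)^{\perp}$ while $T_3$ has row space in $\cR(V_1)$; blocking rows and then columns accordingly and using $\|[A\ B]\|_2^2\le\|A\|_2^2+\|B\|_2^2$ and its transpose gives $\|Q-\wtd Q\|_2^2\le\|T_1\|_2^2+\|T_2\|_2^2+\|T_3\|_2^2$, into which the spectral instances of the bounds above (with $\|(\wtd B-B)_{\bestr}\|_2=\|\wtd B-B\|_2$) plug to give \eqref{eq:polar-2norm}.

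The main obstacle is choosing the split so that the rotation piece is exactly $Z=\wtd U_1^\T U_1-\wtd V_1^\T V_1$ and then noticing that $Z$ satisfies a Sylvester equation whose two coefficient matrices $\wtd\Sigma_1$, $\Sigma_1$ are \emph{positive definite}; that symmetry is precisely what delivers the denominator $\sigma_r(B)+\sigma_r(\wtd B)$ instead of the weaker $\sigma_r(B)$ (or an extra factor from inverting a pseudoinverse) that a careless manipulation would produce. A secondary subtlety is that \eqref{eq:polar-fnorm} does \emph{not} drop out of the three-term split --- that route only reproduces a bound of the shape of \eqref{eq:polar-2norm} --- so it genuinely needs the coercivity argument, whose constant rests on the elementary fact that the diagonal entries of $\wtd U_1^\T U_1V_1^\T\wtd V_1$ are at most $1$.
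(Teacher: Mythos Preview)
The paper does not supply its own proof of this lemma; it is quoted verbatim from \cite{li:1995} and \cite{lisu:2003}. So there is no in-paper argument to compare against. That said, your proposal is a correct, self-contained proof and in fact mirrors the strategies of those original sources.

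A few remarks on the details, all of which check out:
\begin{itemize}
\item The identity $Q-\wtd Q=T_1-T_2+T_3$ is correct (expand and cancel), and the claim that $T_1=T_2=0$ exactly when $r=n=m$ is right: $r=m$ forces $\Pi=I_m$ hence $T_2=0$, but $T_1=0$ additionally requires $\wtd P=I_n$, i.e.\ $r=n$.
\item The Sylvester identity $\wtd\Sigma_1 Z+Z\Sigma_1=\wtd V_1^{\T}(\wtd B-B)^{\T}U_1-\wtd U_1^{\T}(\wtd B-B)V_1$ follows directly from the reduced-SVD relations you list, and the integral representation together with \eqref{eq:UI-2:consistent} gives the $2/(\sigma_r(B)+\sigma_r(\wtd B))$ constant.
\item For the Frobenius refinement, the key inequality $(MN)_{ii}\le 1$ and $(NM)_{ii}\le 1$ (hence each diagonal of $I-MN$, $I-NM$ is nonnegative) is what makes the termwise comparison $\sum_i\sigma_i(1-(NM)_{ii})\ge\sigma_r\sum_i(1-(NM)_{ii})$ legitimate; combined with $\tr(I-MN)=\tr(I-NM)$ this yields the coercivity bound and then \eqref{eq:polar-fnorm} via Cauchy--Schwarz.
\item For the spectral refinement, the column/row-space orthogonalities you identify give $(Q-\wtd Q)^{\T}(Q-\wtd Q)=T_1^{\T}T_1+(-T_2+T_3)^{\T}(-T_2+T_3)$ and then $(-T_2+T_3)(-T_2+T_3)^{\T}=T_2T_2^{\T}+T_3T_3^{\T}$, so the Pythagorean bound $\|Q-\wtd Q\|_2^2\le\|T_1\|_2^2+\|T_2\|_2^2+\|T_3\|_2^2$ is valid and delivers \eqref{eq:polar-2norm}.
\end{itemize}
Your closing observation that the Frobenius bound genuinely requires the coercivity route (the three-term split only gives a constant of the \eqref{eq:polar-2norm} type) is also accurate and worth keeping.
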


The next lemma characterizes $X\in\bbO^{n\times k}$ such that $X^{\T}D\succeq 0$ into a sum of two terms, one of which depends on $\cR(X)$ only.

\begin{lemma}[{\cite[Theorem~3.2]{wazl:2022}}] \label{le:mp}
Given a $k$-dimensional subspace $\cX$ of $\bbR^n$, let $X_{\diamond}\in\bbO^{n\times k}$ with $\cR(X_{\diamond})=\cX$,
and let $r=\rank(X_{\diamond}^{\T}D)$ where $D\in\bbR^{n\times k}$. Let the SVD of $X_{\diamond}^{\T}D$ be
\begin{equation}\label{eq:svd-xdd}
X_{\diamond}^{\T}D=\kbordermatrix{ &\sss r &\sss k-r\\
                                 & U_1 & U_2}\times
                      \kbordermatrix{ &\sss r &\sss k-r \\
                                    \sss r & \Sigma_1 & \\
                                    \sss k-r & & 0}\times
                      \kbordermatrix{ &\\
                               \sss r  & V_1^{\T} \\
                               \sss k-r  & V_2^{\T}}.
\end{equation}
Any $X\in\bbO^{n\times k}$ with $\cR(X)=\cX$ such that $X^{\T}D\succeq 0$ takes the form
\[
X=X_{\diamond}U_1V_1^{\T}+X_{\diamond}U_2WV_2^{\T},
\]
where the first term, although constructed from $X_{\diamond}$, depends only on $\cX$, while
the second term has a freedom in $W\in\bbO^{(k-r)\times (k-r)}$. In particular, if $r=k$, then
$X=X_{\diamond}UV^{\T}$ is unique, given $\cX$.
\end{lemma}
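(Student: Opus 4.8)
The plan is to convert this into a statement purely about $k\times k$ orthogonal matrices. Since $\cR(X)=\cR(X_{\diamond})=\cX$ and $X,X_{\diamond}\in\bbO^{n\times k}$, there is a unique $Q\in\bbO^{k\times k}$ with $X=X_{\diamond}Q$, and conversely every $X_{\diamond}Q$ with $Q\in\bbO^{k\times k}$ is an orthonormal basis matrix of $\cX$. Writing the SVD in \eqref{eq:svd-xdd} as $X_{\diamond}^{\T}D=U\Sigma V^{\T}$ with $U=[U_1,U_2]$, $V=[V_1,V_2]$ and $\Sigma=\diag(\Sigma_1,0)$, the constraint $X^{\T}D\succeq 0$ becomes $Q^{\T}U\Sigma V^{\T}\succeq 0$. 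So it suffices to characterize all $Q\in\bbO^{k\times k}$ satisfying this and then read off $X=X_{\diamond}Q$.

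The key step I would prove is that $Q^{\T}U\Sigma V^{\T}\succeq 0$ if and only if $Q^{\T}U_1=V_1$. The \emph{if} direction is immediate: $Q^{\T}U_1=V_1$ gives $Q^{\T}U\Sigma V^{\T}=(Q^{\T}U_1)\Sigma_1V_1^{\T}=V_1\Sigma_1V_1^{\T}\succeq 0$. For \emph{only if} I would invoke the uniqueness of the canonical polar decomposition recalled above. Observe that $X_{\diamond}^{\T}D=(U_1V_1^{\T})(V_1\Sigma_1V_1^{\T})$ is the canonical polar decomposition of $X_{\diamond}^{\T}D$: the first factor is a partial isometry with $\cR((U_1V_1^{\T})^{\T})=\cR(V_1)$, and $V_1\Sigma_1V_1^{\T}=\big((X_{\diamond}^{\T}D)^{\T}X_{\diamond}^{\T}D\big)^{1/2}\succeq 0$ has range $\cR(V_1)$ as well. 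Left-multiplying by $Q^{\T}$ gives $X^{\T}D=(Q^{\T}U_1V_1^{\T})(V_1\Sigma_1V_1^{\T})$, again a canonical polar decomposition because $Q^{\T}U_1$ has orthonormal columns and $(X^{\T}D)^{\T}X^{\T}D=(X_{\diamond}^{\T}D)^{\T}X_{\diamond}^{\T}D$; hence $Q^{\T}U_1V_1^{\T}$ is \emph{the} partial isometry factor of $X^{\T}D$. On the other hand, if $X^{\T}D\succeq 0$ then its canonical polar decomposition is $X^{\T}D=(V_1V_1^{\T})(X^{\T}D)$, whose partial isometry factor is the orthogonal projector $V_1V_1^{\T}$ onto $\cR(X^{\T}D)=\cR(V_1)$. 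Uniqueness forces $Q^{\T}U_1V_1^{\T}=V_1V_1^{\T}$, and right-multiplying by $V_1$ yields $Q^{\T}U_1=V_1$. A self-contained alternative avoids polar decompositions: set $M=Q^{\T}U\in\bbO^{k\times k}$ and write $Q^{\T}U\Sigma V^{\T}=M_{(:,1:r)}\Sigma_1V_1^{\T}$; symmetry of this product forces $\cR(M_{(:,1:r)})=\cR(V_1)$, so $M_{(:,1:r)}=V_1G$ for some $G\in\bbO^{r\times r}$, and then symmetry together with $\Sigma_1\succ 0$ forces $G\Sigma_1=\Sigma_1$, whence $G=I_r$.

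It then remains to solve $Q^{\T}U_1=V_1$, equivalently $QV_1=U_1$. Writing $Q=UMV^{\T}$ with $M\in\bbO^{k\times k}$ and using that the columns of $V_1$ (resp.\ $U_1$) are the first $r$ columns of the orthogonal matrix $V$ (resp.\ $U$), the equation says the first $r$ columns of $M$ coincide with those of $I_k$; orthogonality of $M$ then forces $M=I_r\oplus W$ with $W\in\bbO^{(k-r)\times(k-r)}$ arbitrary. Hence $Q=U_1V_1^{\T}+U_2WV_2^{\T}$ and $X=X_{\diamond}Q=X_{\diamond}U_1V_1^{\T}+X_{\diamond}U_2WV_2^{\T}$, the claimed form; when $r=k$ there is no $W$-block, so $Q=U_1V_1^{\T}=UV^{\T}$ and $X=X_{\diamond}UV^{\T}$ is unique. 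Finally, to see that the first term $X_{\diamond}U_1V_1^{\T}$ depends only on $\cX$, replace $X_{\diamond}$ by $X_{\diamond}P$ with $P\in\bbO^{k\times k}$: then $(X_{\diamond}P)^{\T}D=P^{\T}U\Sigma V^{\T}$ has the same psd factor $V_1\Sigma_1V_1^{\T}$ in its canonical polar decomposition and hence partial isometry factor $P^{\T}U_1V_1^{\T}$, so the first term recomputed from $X_{\diamond}P$ equals $(X_{\diamond}P)(P^{\T}U_1V_1^{\T})=X_{\diamond}U_1V_1^{\T}$, unchanged.

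The main obstacle is the second paragraph, specifically the rank-deficient case $r<k$: one must show that imposing $X^{\T}D\succeq 0$ collapses the $k(k-1)/2$-parameter family of admissible $Q$ down to exactly the single identity $Q^{\T}U_1=V_1$ on the ``$r$-part'' (leaving only the $W$-freedom), and it is the invertibility of $\Sigma_1$ --- manifested either through uniqueness of the canonical polar decomposition or directly through $G\Sigma_1=\Sigma_1\Rightarrow G=I_r$ --- that makes this work. Once that equivalence is in hand, the remaining steps are bookkeeping, and the full-rank conclusion is an immediate specialization.
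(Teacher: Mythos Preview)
The paper does not supply its own proof of this lemma; it is quoted from \cite[Theorem~3.2]{wazl:2022} and used as a preliminary, so there is no in-paper argument to compare against.

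That said, your proof is correct and self-contained. The reduction to $Q\in\bbO^{k\times k}$ with $Q^{\T}U\Sigma V^{\T}\succeq 0$, the key equivalence $Q^{\T}U\Sigma V^{\T}\succeq 0\Leftrightarrow Q^{\T}U_1=V_1$ via uniqueness of the canonical polar factor, the resulting parametrization $Q=U(I_r\oplus W)V^{\T}$, and the invariance check $X_{\diamond}\mapsto X_{\diamond}P$ for the first term are all sound. One minor wording point: in your alternative argument the phrase ``symmetry \dots\ forces $G\Sigma_1=\Sigma_1$'' is a shortcut; the clean way to finish is that $G\Sigma_1\succeq 0$ with $G\in\bbO^{r\times r}$ and $\Sigma_1\succ 0$ makes $\Sigma_1=G^{\T}(G\Sigma_1)$ a polar decomposition of the invertible $\Sigma_1$, whose orthogonal factor is unique, hence $G=I_r$.
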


\section{Main result}\label{sec:main}
In this section, we will present our main result that answers the question in \eqref{eq:TheQuestion}.
Given $k$-dimensional subspace $\cX$ of $\bbR^n$, consider
all $X\in\bbO^{n\times k}$ such that
\begin{equation}\label{eq:charz-X}
\cR(X)=\cX, \quad X^{\T}D\succeq 0.
\end{equation}
The first condition $\cR(X)=\cX$ merely says that $X$
is an orthonormal basis matrix of $\cX$ and it is not unique, and in fact, it has the degree of freedom:
$k^2-\frac 12 k(k+1)=\frac 12 k(k-1)$. It is the  second characterization $X^{\T}D\succeq 0$ that will decide
which one or ones among the orthonormal basis matrices of $\cX$ should  be.
Let $r=\rank(X^{\T}D)$ and $r'=k-r$, and let the SVD of $X^{\T}D$ be
\begin{equation}\label{eq:SVD:XD}
X^{\T}D=V\Sigma V^{\T}\equiv\kbordermatrix{ &\sss r &\sss r'\\
                                 & V_1 & V_2}\times
                      \kbordermatrix{ &\sss r &\sss r' \\
                                    \sss r & \Sigma_1 & \\
                                    \sss r' & & 0}\times
                      \kbordermatrix{ &\\
                               \sss r  & V_1^{\T} \\
                               \sss r'  & V_2^{\T}}.
\end{equation}
By Lemma~\ref{le:mp}, we know that  $Y\in\bbO^{n\times k}$
such that $\cR(Y)=\cX$ and $Y^{\T}D\succeq 0$ if and only if
\begin{equation}\label{eq:xset}
Y\in\bbX:=\left\{XV\begin{bmatrix}
                     I_r &  \\
                      & W
                   \end{bmatrix}V=
XV_1V_1^{\T}+XV_2WV_2^{\T}\, :\,W\in\bbO^{r'\times r'}\right\}.
\end{equation}
When $\rank(X^{\T}D)=k$, the second term $XV_2WV_2^{\T}$ disappears and $\bbX=\{X\}$, which means that $X$ is unique.
But when $\rank(X^{\T}D)<k$, $\bbX$ is parameterized by a matrix variable $W\in\bbO^{r'\times r'}$ that
has the degree of freedom $\frac 12r'(r'-1)$. Since any element in $\bbX$ could be taken as $X$ to begin with, $X$ is not
uniquely decided by $X^{\T}D\succeq 0$. It worths emphasizing that $XV_1V_1^{\T}$ in \eqref{eq:xset}
depends on $\cX$ only, although it is constructed with the help of a particular orthonormal basis matrix $X$ of
$\cX$.

The same can be said about $\wtd X\in\bbO^{n\times k}$ such that
\begin{equation}\label{eq:charz-tX}
\cR(\wtd X)=\wtd\cX, \quad \wtd X^{\T}D\succeq 0.
\end{equation}
In view of those, it only makes sense to bound $\|X-\wtd X\|$ when $\rank(X^{\T}D)=\rank(\wtd X^{\T}D)=k$ but to bound
$\min \|\wtd X-Y\|$ subject to $Y\in\bbX$ when $\rank(X^{\T}D)=\rank(\wtd X^{\T}D)<k$.

Let $r=\rank(X^{\T}D)=\rank(\wtd X^{\T}D)$, and express the SVD of
$\wtd X^{\T}D$ in the same way as in \eqref{eq:SVD:XD}, except putting a {\em tilde\/} on
each of the symbols $V_i$ and $\Sigma_1$ there. Write
\begin{equation}\label{eq:Sigma1}
\Sigma_1=\diag(\sigma_1,\sigma_2,\ldots,\sigma_r),\quad
\sigma_1\ge\cdots\ge\sigma_r>0.
\end{equation}
and, similarly for $\wtd\Sigma_1$.
Our main result of this paper is stated in the following theorem.

\begin{theorem} \label{thm:main}
Given $D\in\bbR^{n\times k}$ and $k$-dimensional subspaces $\cX$ and $\wtd\cX$ of $\bbR^n$,
let $X,\,\wtd X\in\bbO^{n\times k}$ such that both \eqref{eq:charz-X} and \eqref{eq:charz-tX} hold.
Suppose that $\rank(X^{\T}D)=\rank(\wtd X^{\T}D)=:r$.
Then for any unitarily invariant norm $\|\cdot\|_{\UI}$
\begin{equation} \label{ineq:main}
\min_{Y\in\bbX}\|\wtd X-Y\|_{\UI}\le\eta\,\|\sin\Theta(\cX,\wtd\cX)\|_{\UI},
\end{equation}
where $\bbX$ is defined as in~\eqref{eq:xset}, and
\begin{equation}\label{eq:main-eta}
\eta=
\begin{cases}
  \sqrt 2\left(1+\dfrac{2\,\|D\|_2}{\sigma_k+\tilde\sigma_k}\right),  &  \mbox{if $r=k$},\\[1em]
  \sqrt 2\left(1+\dfrac{2\,\|D\|_2}{\sigma_r+\tilde\sigma_r}\right)+\dfrac{(2\sqrt 2+4)\,\|D\|_2}{\max\{\sigma_r,\tilde\sigma_r\}},&\mbox{if $r<k$}.
\end{cases}
\end{equation}
Inequality \eqref{ineq:main} can be improved for  the matrix spectral and Frobenius norm,
in the case when $r<k$, with a smaller $\eta$ given by
\begin{equation}\label{eq:main-eta'}
\eta=
\begin{cases}
  \sqrt 2\left(1+\dfrac{2\,\|D\|_2}{\sigma_r+\tilde\sigma_r}\right)
         +\dfrac{4\,\|D\|_2}{\max\{\sigma_r,\tilde\sigma_r\}},
                    &  \mbox{for $\|\cdot\|_{\UI}=\|\cdot\|_{\F}$},\\[1em]
  \sqrt 2+\sqrt{\dfrac{8\,\|D\|_2^2}{(\sigma_r+\tilde\sigma_r)^2}
             +\dfrac{4\,\|D\|_2^2}{[\max\{\sigma_r,\tilde\sigma_r\}]^2}}
             +\dfrac{4\,\|D\|_2}{\max\{\sigma_r,\tilde\sigma_r\}},
                    &\mbox{for $\|\cdot\|_{\UI}=\|\cdot\|_2$}.
\end{cases}
\end{equation}
\end{theorem}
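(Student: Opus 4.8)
The plan is to reduce the bound on $\min_{Y\in\bbX}\|\wtd X - Y\|_{\UI}$ to three ingredients that we already have: (i) Lemma~\ref{le:max-angle}, which produces an orthogonal $Q$ realizing $\|X-\wtd XQ\|_{\UI}\le\sqrt 2\,\|\sin\Theta(\cX,\wtd\cX)\|_{\UI}$; (ii) the SVD-perturbation bound of Lemma~\ref{lm:SVD-pert:wedin} applied to the pair $B=X^{\T}D$, $\wtd B=\wtd X^{\T}D$; and (iii) the polar-factor perturbation bound of Lemma~\ref{le:polar}. First I would note that $\wtd X = XQ + E$ with $\|E\|_{\UI}\le\sqrt2\,\|\sin\Theta\|_{\UI}$ (writing $E := \wtd X - XQ$ after replacing $X$ by $XQ^{\T}$ if necessary to match orientations — more precisely I apply Lemma~\ref{le:max-angle} to get $\|X Q-\wtd X\|_{\UI}$ small), so that $\cR(XQ)$ and $\cR(\wtd X)$ being both equal to tracking each other, one has $\wtd X^{\T}D = Q^{\T}X^{\T}D + E^{\T}D$. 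The key observation is that the canonical polar factor of $\wtd X^{\T}D$ is $\wtd V_1\wtd V_1^{\T}\cdot(\text{identity part})$... rather, I should phrase it via Lemma~\ref{le:mp}: the candidate $Y\in\bbX$ I will choose is the one whose "second term" uses the $W$ coming from the partial-isometry alignment between $X^{\T}D$ and $\wtd X^{\T}D$.

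Concretely, for the case $r=k$: here $\bbX=\{X\}$, so I must bound $\|\wtd X - X\|_{\UI}$ directly. Using $X = \wtd X Q^{\T} - EQ^{\T}$ is awkward; instead I write $X - \wtd X = (X - \wtd X Q) + \wtd X Q - \wtd X = (X-\wtd X Q) + \wtd X(Q-I_k)$, and I must show $Q$ can be taken close to $I_k$. The point is that both $X^{\T}D\succeq0$ and $\wtd X^{\T}D\succeq0$, and $(\wtd X Q)^{\T}D = Q^{\T}\wtd X^{\T}D$; comparing the canonical polar decompositions, since $X^{\T}D$ is already PSD its polar factor is $I_k$, and $\wtd X^{\T}D$'s polar factor is $I_k$ as well — so the relevant rotation is forced. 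I would apply Lemma~\ref{le:polar} to $B = (XQ)^{\T}D = Q^{\T}X^{\T}D$ and $\wtd B=\wtd X^{\T}D$: these differ by $(XQ-\wtd X)^{\T}D$, whose $\UI$-norm is at most $\|D\|_2\,\|XQ-\wtd X\|_{\UI}\le\sqrt2\,\|D\|_2\,\|\sin\Theta\|_{\UI}$ by \eqref{eq:UI-2:consistent}. The polar factor of $Q^{\T}X^{\T}D$ is $Q^{\T}$ (times the PSD part), that of $\wtd X^{\T}D$ is $I_k$; hence $\|Q^{\T}-I_k\|_{\UI}=\|Q-I_k\|_{\UI}$ is bounded by $\frac{2}{\sigma_k+\tilde\sigma_k}\cdot\sqrt2\,\|D\|_2\,\|\sin\Theta\|_{\UI}$ using the $r=n=m$ branch of \eqref{eq:polar-iu} (here the matrices are $k\times k$ nonsingular). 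Then $\|\wtd X - X\|_{\UI}\le\|X-\wtd XQ\|_{\UI}+\|\wtd X\|_2\|Q-I_k\|_{\UI}\le\sqrt2\|\sin\Theta\|_{\UI}+\frac{2\sqrt2\|D\|_2}{\sigma_k+\tilde\sigma_k}\|\sin\Theta\|_{\UI}$, giving the stated $\eta=\sqrt2(1+\frac{2\|D\|_2}{\sigma_k+\tilde\sigma_k})$.

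For the case $r<k$: by Lemma~\ref{le:mp} applied with $X_\diamond = X$, elements of $\bbX$ are $XV_1V_1^{\T}+XV_2WV_2^{\T}$. I will use Lemma~\ref{lm:SVD-pert:wedin} and Lemma~\ref{le:polar} on the rank-$r$ matrices $B=(XQ)^{\T}D$ and $\wtd B=\wtd X^{\T}D$ (again $\|\wtd B - B\|_{\UI}\le\sqrt2\|D\|_2\|\sin\Theta\|_{\UI}$), obtaining that the canonical partial-isometry factors $Q_B$ and $Q_{\wtd B}$ satisfy $\|Q_B - Q_{\wtd B}\|_{\UI}\le(\frac{2}{\sigma_r+\tilde\sigma_r}+\frac{2}{\max\{\sigma_r,\tilde\sigma_r\}})\sqrt2\|D\|_2\|\sin\Theta\|_{\UI}$ from the "otherwise" branch of \eqref{eq:polar-iu}; for the Frobenius and spectral refinements I would instead invoke \eqref{eq:polar-fnorm-2norm}. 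The remaining work is bookkeeping: I must produce an explicit $Y\in\bbX$ from the data of these polar/singular factors so that $\wtd X - Y$ telescopes as (angle term) $+$ (polar-factor term) $+$ (a term controlled by $\|\sin\Theta(\cV,\wtd\cV)\|_{\UI}$ between the right singular subspaces, which Lemma~\ref{lm:SVD-pert:wedin} bounds by $\frac{\sqrt2\|D\|_2}{\max\{\sigma_r,\tilde\sigma_r\}}\|\sin\Theta\|_{\UI}$), and then collect constants. I expect the main obstacle to be precisely this last step — choosing the right $W\in\bbO^{r'\times r'}$ (equivalently, the right representative $Y$) and managing the interaction between the $V_1$-block, which is pinned by $\cX$, and the $V_2$-block, which is free, so that all three error contributions line up with the same $\|\sin\Theta(\cX,\wtd\cX)\|_{\UI}$ and produce the constants $2\sqrt2+4$ (general UI), $4$ (Frobenius), and the square-root expression (spectral). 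Verifying that the chosen $Y$ genuinely lies in $\bbX$ (i.e. that the constructed $W$ is orthogonal) is the place where the rank-equality hypothesis $\rank(X^{\T}D)=\rank(\wtd X^{\T}D)=r$ is essential, since it guarantees the canonical polar factors are well-defined and of matching rank.
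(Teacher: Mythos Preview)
Your approach is essentially the paper's. The $r=k$ case is correct: you and the paper both introduce an auxiliary orthogonal $Q$ via Lemma~\ref{le:max-angle}, compare the polar decompositions $Q^{\T}(X^{\T}D)$ (polar factor $Q^{\T}$) and $\wtd X^{\T}D$ (polar factor $I_k$), invoke the square case of Lemma~\ref{le:polar} to bound $\|Q-I_k\|_{\UI}$, and telescope. The paper puts $Q$ on the $\wtd X$ side (defining $\what X=\wtd XQ^{\T}$) rather than on the $X$ side, but this is cosmetic.

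For $r<k$ your outline is right but undercounts the ``$V_2$-block'' contribution. After the telescoping
\[
\min_{Y\in\bbX}\|\wtd X-Y\|_{\UI}\le\|X-\what X\|_{\UI}+\|\what U_1\wtd V_1^{\T}-V_1V_1^{\T}\|_{\UI}
+\min_{W}\|\what U_2\wtd W\wtd V_2^{\T}-V_2WV_2^{\T}\|_{\UI},
\]
the middle term is the canonical-polar perturbation you describe, but the last term requires \emph{two} singular-subspace comparisons, not one: both $\what U_2$ (the left singular space of $\what X^{\T}D$) and $\wtd V_2$ (its right singular space) must be compared with $V_2$. Lemma~\ref{lm:SVD-pert:wedin} bounds each of $\|\sin\Theta(\cV_2,\what\cU_2)\|_{\UI}$ and $\|\sin\Theta(\cV_2,\wtd\cV_2)\|_{\UI}$ by $\frac{\sqrt2\,\|D\|_2}{\max\{\sigma_r,\tilde\sigma_r\}}\|\sin\Theta(\cX,\wtd\cX)\|_{\UI}$, and then a further application of Lemma~\ref{le:max-angle} to each (producing $W_1,W_2\in\bbO^{r'\times r'}$, with the final choice $W=W_1W_2^{\T}$) contributes an extra $\sqrt2$, so the third term costs $\frac{4\,\|D\|_2}{\max\{\sigma_r,\tilde\sigma_r\}}\|\sin\Theta(\cX,\wtd\cX)\|_{\UI}$, not $\frac{\sqrt2\,\|D\|_2}{\max\{\sigma_r,\tilde\sigma_r\}}$ as your sketch suggests. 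Once you account for this, the constants $2\sqrt2+4$ (general $\|\cdot\|_{\UI}$), $4$ (Frobenius) and the square-root expression (spectral) fall out exactly.
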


\begin{remark}\label{rk:main}
There are a few comments in order.
\begin{enumerate}[(a)]
\item For the case $r=k$, the left-hand side of~\eqref{ineq:main} is really $\|\wtd X-X\|_{\UI}$, yielding
      \begin{equation} \label{eq:main;r=k}
      \|\wtd X-X\|_{\UI}\le\eta\,\|\sin\Theta(\cX,\wtd\cX)\|_{\UI},
      \end{equation}
      because then $\bbX=\{X\}$
      as we previously explained.

\item The coefficient $\eta$ is smallest when $r=k$, for any general unitarily invariant norm and for the
      two specific ones: the matrix spectral and Frobenius norm.
      Both values for $\eta$ in \eqref{eq:main-eta'} are smaller than
      the ones in \eqref{eq:main-eta} for the case $r<k$. This is easily seen for the first value in \eqref{eq:main-eta'};
      for the second value, we may use
      $\sqrt{a^2+b^2}\le a+b$ for all $a,\,b\ge 0$ to see the fact.


\item Similarly to the definition of set $\bbX$, we may define, associated with $\wtd X$,
      \begin{equation}\label{eq:txset}
      \wtd\bbX=\left\{\wtd X\wtd V\begin{bmatrix}
                           I_r &  \\
                            & W
                         \end{bmatrix}\wtd V=\wtd X\wtd V_1\wtd V_1^{\T}+\wtd X\wtd V_2 W\wtd V_2^{\T}\,:\, W\in\bbO^{r'\times r'}\right\}.
      \end{equation}
      As we explained before, the term $\wtd X\wtd V_1\wtd V_1^{\T}$ depends on $\wtd\cX$ only, and
      $\wtd X$ is just one of the elements in $\wtd\bbX$ and can be any one in the set as far as the conclusion
      of Theorem~\ref{thm:main} is concerned. Hence \eqref{ineq:main} leads to a bound on
      the Hausdorff distance~\cite[Section~11.1]{pete:2006}
      between $\bbX$ and $\wtd\bbX$
      \begin{equation}\label{eq:haus-dist}
      \dist(\bbX,\wtd\bbX)=\max_{\wtd Y\in\wtd\bbX}\,\min_{Y\in\bbX}\|\wtd Y-Y\|_{\UI}\le\eta\,\|\sin\Theta(\cX,\wtd\cX)\|_{\UI},
      \end{equation}
      which can be interpreted as for any point $\wtd Y$ in $\wtd\bbX$
      there is a point $Y$ in $\bbX$ that is no further than $\eta\|\sin\Theta(\cX,\wtd\cX)\|_{\UI}$ away from $\wtd Y$.
\end{enumerate}
\end{remark}

The rest of this section is devoted to the proof of Theorem~\ref{thm:main}. For that purpose, we notice that,
by Lemma~\ref{le:max-angle}, there exists an orthogonal matrix $Q\in\bbO^{k\times k}$
such that $\what X=\wtd X Q^{\T}\in\bbO^{n\times k}$ satisfies
\begin{equation}\label{eq:(X,tX)}
    \|X-\what X\|_{\UI}\le\sqrt 2\,\|\sin\Theta(\cX,\wtd\cX)\|_{\UI}.
\end{equation}
Note $\cR(\what X)=\wtd\cX$ and also $\wtd X=\what XQ$. Using \eqref{eq:(X,tX)}, we get
\begin{equation}\label{eq:xd-x0td}
\|X^{\T}D-\what X^{\T}D\|_{\UI}\le\|X^{\T}-\what X^{\T}\|_{\UI}\|D\|_2
\le\sqrt 2\,\|D\|_2\,\|\sin\Theta(\cX,\wtd\cX)\|_{\UI}.
\end{equation}

\subsection{Case $\rank(X^{\T}D)=\rank(\wtd X^{\T}D)=k$}
In this case, $X^{\T}D\succ 0$ and $\wtd X^{\T}D\succ 0$, and both $X$ and $\wtd X$ are unique.
Therefore, we can bound $\|X-\wtd X\|_{\UI}$.

Observe that $\what X^{\T}D=Q(\wtd X^{\T}D)$ which is the polar decomposition of $\what X^{\T}D$ because $Q\in\bbO^{k\times k}$ and
$\wtd X^{\T}D\succeq 0$, while
$X^{\T}D=I_k\cdot(X^{\T}D)$ is the polar decomposition of $X^{\T}D$. Hence by Lemma~\ref{le:polar}, we have
$$
\|I_k-Q\|_{\UI}\le\frac{2}{\sigma_k+\tilde\sigma_k}\|X^{\T}D-\what X^{\T}D\|_{\UI}
     \le\frac{2\,\|D\|_2}{\sigma_k+\tilde\sigma_k}\|X^{\T}-\what X^{\T}\|_{\UI}.
$$
Finally, we have, using \eqref{eq:(X,tX)},
\begin{align}
\|X-\wtd X\|_{\UI}&=\|X-\what XQ\|_{\UI} \nonumber\\
 &=\|X-\what X+\what X-\what XQ\|_{\UI} \nonumber\\
 &\le\|X-\what X\|_{\UI}+\|\what X\|_2\|I_k-Q\|_{\UI} \nonumber\\
 &\le\|X-\what X\|_{\UI}
       +\frac{2\,\|D\|_2}{\sigma_k+\tilde\sigma_k}\|X^{\T}-\what X^{\T}\|_{\UI} \nonumber\\
&\le\left(1+\frac{2\,\|D\|_2}{\sigma_k+\tilde\sigma_k}\right)\sqrt 2\,\|\sin\Theta(\cX,\wtd\cX)\|_{\UI},
  \label{eq:upb1}
\end{align}
yielding \eqref{ineq:main} for the case.

\subsection{$\rank(X^{\T}D)=\rank(\wtd X^{\T}D)<k$}

In the current case, both $X$ and $\wtd X$ are not uniquely determined by $X^{\T}D\succeq 0$ and $\wtd X^{\T}D\succeq 0$.
Hence it only make sense to bound $\|\wtd X-Y\|_{\UI}$ subject to $Y\in\bbX$.

Recall $\what X=\wtd X Q^{\T}$ introduced to satisfy \eqref{eq:(X,tX)}. Evidently,
$\rank(\what X^{\T}D)=\rank(\wtd X^{\T}D)=\rank(X^{\T}D)=r<k$. The SVD of $\what X^{\T}D=Q(\wtd X^{\T}D)$ can be given as
\begin{equation}\label{eq:svd-hatXD}
\what X^{\T}D=(Q\wtd U)\wtd\Sigma\wtd V^{\T}\equiv\kbordermatrix{ &\sss r &\sss r'\\
                                 & Q\wtd U_1 & Q\wtd U_2}\times
                      \kbordermatrix{ &\sss r &\sss r' \\
                                    \sss r & \wtd\Sigma_1 & \\
                                    \sss r' & & 0}\times
                      \kbordermatrix{ &\\
                               \sss r  & \wtd V_1^{\T} \\
                               \sss r'  & \wtd V_2^{\T}},
\end{equation}
and let $\what U=Q\wtd U$, $\what U_1=Q\wtd U_1$, and $\what U_2=Q\wtd U_2$.
By Lemma~\ref{le:mp},
there exists a $\wtd W\in\bbO^{r'\times r'}$ such that
\begin{equation}\label{eq:def-tx-nofullrank}
\wtd X=\what X\what U\begin{bmatrix}
                     I_r &  \\
                      & \wtd W
                   \end{bmatrix}\wtd V^{\T}=\what X\what U_1\wtd V_1^{\T}+\what X\what U_2 \wtd W\wtd V_2^{\T}.
\end{equation}
We have, by~\eqref{eq:xset} and \eqref{eq:def-tx-nofullrank},
\begin{align}
\min_{Y\in\bbX}\|\wtd X-Y\|_{\UI}
    &=\min_{W\in\bbO^{r'\times r'}}
        \left\|\what X\what U\begin{bmatrix}
                     I_r &  \\
                      & \wtd W
                   \end{bmatrix}\wtd V^{\T}- XV\begin{bmatrix}
                     I_r &  \\
                      & W
                   \end{bmatrix}V^{\T}\right\|_{\UI} \notag\\
  &\le\min_{W\in\bbO^{r'\times r'}}\left(\left\|\what X\what U\begin{bmatrix}
                     I_r &  \\
                      & \wtd W
                   \end{bmatrix}\wtd V^{\T}- \what XV\begin{bmatrix}
                     I_r &  \\
                      & W
                   \end{bmatrix}V^{\T}\right\|_{\UI}\right. \notag \\
  &\qquad\qquad\qquad \left.+\left\|\what XV\begin{bmatrix}
                     I_r &  \\
                      & W
                   \end{bmatrix}V^{\T}- XV\begin{bmatrix}
                     I_r &  \\
                      & W
                   \end{bmatrix}V^{\T}\right\|_{\UI}\right)  \notag\\
  &=\|\what X- X\|_{\UI}+\min_{W\in\bbO^{r'\times r'}}\left\|\what X\what U\begin{bmatrix}
                     I_r &  \\
                      & \wtd W
                   \end{bmatrix}\wtd V^{\T}- \what XV\begin{bmatrix}
                     I_r &  \\
                      & W
                   \end{bmatrix}V^{\T}\right\|_{\UI}  \notag\\
  &\le\|\what X- X\|_{\UI}+\min_{W\in\bbO^{r'\times r'}}\|\what U_1\wtd V_1^{\T}-V_1V_1^{\T}
        +\what U_2 \wtd W\wtd V_2^{\T}-V_2WV_2^{\T}\|_{\UI}\notag \\
  &\le\|\what X- X\|_{\UI}+\|\what U_1\wtd V_1^{\T}-V_1V_1^{\T}\|_{\UI}
      +\min_{W\in\bbO^{r'\times r'}}\|\what U_2\wtd W\wtd V_2^{\T}- V_2WV_2^{\T}\|_{\UI}.
         \label{eq:dist-all}
\end{align}
Now, we shall bound the three terms in the right-hand side of \eqref{eq:dist-all} in terms of
$\|\sin\Theta(\cX,\wtd\cX)\|_{\UI}$.
First $\|\what X- X\|_{\UI}$ has already been taken care of by \eqref{eq:(X,tX)}.
Next, it can be seen that $\what U_1\wtd V_1^{\T}$ and $V_1V_1^{\T}$ are the canonical isometry polar factors of
$\what X^{\T}D$ and $X^{\T}D$, respectively.
By Lemma~\ref{le:polar} and \eqref{eq:xd-x0td}, we have
\begin{align}\label{eq:dist-1}
 \notag \|\what U_1\wtd V_1^{\T}-V_1V_1^{\T}\|_{\UI}&\le\left(\frac{2}{\sigma_r+\tilde\sigma_r}
 +\frac{2}{\max\{ \sigma_r,\tilde\sigma_r\}}\right)\,\|X^{\T}D-\what X^{\T}D\|_{\UI}\\
&\le \left(\frac{2}{\sigma_r+\tilde\sigma_r}+\frac{2}{\max\{\sigma_r,\tilde\sigma_r\}}\right)
  \sqrt 2\,\|D\|_2\,\|\sin\Theta(\cX,\wtd\cX)\|_{ \ui}.
\end{align}
Finally, let $\what\cU_i=\cR(\what U_i)$ and $\cV_i=\cR(V_i)$ for $i=1,2$.
It follows from~\eqref{eq:def-comp-sin} and Lemma~\ref{lm:SVD-pert:wedin} that
\begin{align}\label{eq:sinu2v2}
\notag \|\sin\Theta(\cV_2,\what\cU_2)\|_{\UI}&=\|V_1^{\T}\what U_2\|_{\UI}=\|\what U_2^{\T}V_1\|_{\UI}\\
\notag &=\|\sin\Theta(\cV_1,\what\cU_1)\|_{\UI}
         \le\frac{\|X^{\T}D-\what X^{\T}D\|_{\UI}}{\max\{ \sigma_r,\tilde\sigma_r\}} \\
&\le\frac{\sqrt 2\,\|D\|_2\,}{\max\{\sigma_r,\tilde\sigma_r\}}\|\sin\Theta(\cX,\wtd\cX)\|_{ \ui},
\end{align}
where the last inequality holds because of~\eqref{eq:xd-x0td}.
Note that $\what U_2\wtd W,\,V_2\in\bbO^{k\times r'}$ satisfying $\cR(\what U_2\wtd W)=\what\cU_2$ and $\cR(V_2)=\cV_2$.
Hence, by Lemma~\ref{le:max-angle},
there exists an orthogonal matrix $W_1\in\bbO^{r'\times r'}$ such that
\begin{equation} \label{eq:u2}
   \|V_2W_1-\what U_2\wtd W\|_{\UI}\le\sqrt 2\,\|\sin\Theta(\cV_2,\what\cU_2)\|_{ \ui}\le
   \frac{2\,\|D\|_2\,}{\max\{ \sigma_r,\tilde\sigma_r\}}\|\sin\Theta(\cX,\wtd\cX)\|_{ \ui}.
\end{equation}
Similarly, there exists $W_2\in\bbO^{r'\times r'}$ satisfying
\begin{equation} \label{eq:v2}
   \|V_2W_2-\wtd V_2\|_{\UI}\le\sqrt 2\,\|\sin\Theta(\cV_2,\wtd\cV_2)\|_{ \ui}\le
   \frac{2\,\|D\|_2\,}{\max\{ \sigma_r,\tilde\sigma_r\}}\|\sin\Theta(\cX,\wtd\cX)\|_{ \ui}.
\end{equation}
Keeping \eqref{eq:u2} and \eqref{eq:v2} in mind, we have
\begin{align}
\min_{W\in\bbO^{r'\times r'}}&\|\what U_2\wtd W\wtd V_2^{\T}- V_2WV_2^{\T}\|_{\UI} \notag \\
         &\le\|\what U_2\wtd W\wtd V_2^{\T}- V_2W_1W_2^{\T}V_2^{\T}\|_{\UI}  \notag\\
 &=\|\what U_2\wtd W\wtd V_2^{\T}- V_2W_1\wtd V_2^{\T}+V_2W_1\wtd V_2^{\T}-V_2W_1W_2^{\T}V_2^{\T}\|_{\UI} \notag \\
 &\le\|\what U_2\wtd W\wtd V_2^{\T}- V_2W_1\wtd V_2^{\T}\|_{\UI}
       +\|V_2W_1\wtd V_2^{\T}-V_2W_1W_2^{\T}V_2^{\T}\|_{\UI} \notag \\
 &\le\|\what U_2\wtd W- V_2W_1\|_{\UI}+\|\wtd V_2^{\T}-W_2^{\T} V_2^{\T}\|_{\UI} \nonumber\\
       &\le\frac{4\,\|D\|_2\,}{\max\{ \sigma_r,\tilde\sigma_r\}}\|\sin\Theta(\cX,\wtd\cX)\|_{\UI}.
           \label{eq:dist-2}
\end{align}

Together with~\eqref{eq:(X,tX)},~\eqref{eq:dist-all}, \eqref{eq:dist-1} and \eqref{eq:dist-2}, we have
\begin{align}
\min_{Y\in\bbX}\|\wtd X-Y\|_{\UI}
    &\le\left[\sqrt 2+\left(\frac{2}{\sigma_r+\tilde\sigma_r}
         +\frac{2}{\max\{\sigma_r,\tilde\sigma_r\}}\right)\sqrt 2\,\|D\|_2\right. \notag \\
    &\hspace{4cm}\left.     +\frac{4\,\|D\|_2}{\max\{ \sigma_r,\tilde\sigma_r\}}\right]\|\sin\Theta(\cX,\wtd\cX)\|_{\UI} \notag\\
    &\le\left(\sqrt 2+\frac{2\sqrt 2\,\|D\|_2}{\sigma_r+\tilde\sigma_r}
    +\frac{(2\sqrt 2+4)\,\|D\|_2}{\max\{\sigma_r,\tilde\sigma_r\}}\right)\|\sin\Theta(\cX,\wtd\cX)\|_{\UI}.
        \label{eq:upb-ui-nofull}
\end{align}
Inequalities \eqref{eq:upb1} and \eqref{eq:upb-ui-nofull} yield \eqref{ineq:main} for any general unitarily invariant norm.

Inequality \eqref{ineq:main} can be improved for two particular unitarily invariant norms, the matrix spectral and Frobenius norm.
In our case, the improvements are made possible by using better bounds than \eqref{eq:dist-1} when it comes to
the two particular norms, thanks to Lemma~\ref{le:polar}.

By Lemma~\ref{le:polar},
inequality~\eqref{eq:dist-1} can be improved, in the case of the Frobenius norm, to
\begin{align}\label{eq:dist-f1}
\notag \|\what U_1\wtd V_1^{\T}-V_1V_1^{\T}\|_{\F}&\le\frac{2}{\sigma_r+\tilde\sigma_r}\,
\|X^{\T}D-\what X^{\T}D\|_{\F}\\
&\le \frac{2\sqrt 2\,\|D\|_2}{\sigma_r+\tilde\sigma_r}\,\|\sin\Theta(\cX,\wtd\cX)\|_{\F}.
\end{align}
Therefore, together \eqref{eq:dist-all},~\eqref{eq:dist-2} and~\eqref{eq:dist-f1} lead to
\begin{equation}\label{eq:upb-fnorm-nofull}
\min_{Y\in\bbX}\|\wtd X-Y\|_{\F}\le\left(\sqrt 2+\frac{2\sqrt 2\,\|D\|_2}{\sigma_r+\tilde\sigma_r}
+\frac{4\,\|D\|_2}{\max\{\sigma_r,\tilde\sigma_r\}}\right)\,\|\sin\Theta(\cX,\wtd\cX)\|_{\F}.
\end{equation}
Similarly, when $\|\cdot\|_{\UI}=\|\cdot\|_2$, we have by~\eqref{eq:polar-2norm}
\begin{equation}\label{eq:upb-2norm-nofull}
\min_{Y\in\bbX}\|\wtd X-Y\|_2
    \le\left(\sqrt 2+\sqrt{\dfrac{8\,\|D\|_2^2}{(\sigma_r+\tilde\sigma_r)^2}
             +\dfrac{4\,\|D\|_2^2}{\max\{\sigma_r^2,\tilde\sigma_r^2\}}}
             +\frac{4\,\|D\|_2}{\max\{\sigma_r,\tilde\sigma_r\}}\right)
             \,\|\sin\Theta(\cX,\wtd\cX)\|_2.
\end{equation}
Inequalities \eqref{eq:upb-fnorm-nofull} and \eqref{eq:upb-2norm-nofull} yield
\eqref{ineq:main} with  improved $\eta$ given as in \eqref{eq:main-eta'}.

\begin{remark}\label{rk:main'}
Slight improvements to \eqref{ineq:main} for any general unitarily invariant norm are also possible from another direction.
Assuming $r=\rank(X^{\T}D)=\rank(\wtd X^{\T}D)<k$, we can have
\[
\max\big\{\|\sin\Theta(\cV_1,\what\cU_1)\|_{\UI},\; \|\sin\Theta(\cV_1,\wtd\cV_1)\|_{\UI}\big\}
\le\frac{\big\|\big(X^{\T}D-\what X^{\T}D\big)_{\bestr}\big\|_{\UI}}{\max\{\sigma_r,\tilde\sigma_r\}}
\]
by~\eqref{eq:sin-a},  and improve \eqref{eq:xd-x0td} to
\begin{align}
\big\|\big(X^{\T}D-\what X^{\T}D\big)_{\bestr}\big\|_{\UI}
    &\le \|D\|_2\,\big\|\big(X^{\T}-\what X^{\T}\big)_{\bestr}\big\|_{\UI} \nonumber \\
    &\le\sqrt 2\,\|D\|_2\,\big\|\big[\sin\Theta(\cX,\wtd\cX)\big]_{\bestr}\big\|_{\UI}.  \label{eq:xd-x0td'}
\end{align}
Note
$\big\|\big[\sin\Theta(\cX,\wtd\cX)\big]_{\bestr}\big\|_{\UI}
=\big\|\big(X_{\perp}^{\T}\wtd X\big)_{\bestr}\big\|_{\UI}$ in~\eqref{eq:sinu2v2} to obtain
\[
\|\sin\Theta(\cV_2,\what\cU_2)\|_{\UI}\le\frac{\sqrt 2\,\|D\|_2\,}{\max\{\sigma_r,\tilde\sigma_r\}}
\big\|\big[\sin\Theta(\cX,\wtd\cX)\big]_{\bestr}\big\|_{\UI}.
\]
Similarly, improvements to \eqref{eq:u2} and~\eqref{eq:v2} can be obtained as follows:
\begin{align*}
   \|V_2W_1-\what U_2\wtd W\|_{\UI}&\le\sqrt 2\,\|\sin\Theta(\cV_2,\what\cU_2)\|_{\UI}
\le\frac{2\,\|D\|_2\,}{\max\{\sigma_r,\tilde\sigma_r\}}
\big\|\big[\sin\Theta(\cX,\wtd\cX)\big]_{\bestr}\big\|_{\UI},\\
   \|V_2W_2-\wtd V_2\|_{\UI}&\le\sqrt 2\,\|\sin\Theta(\cV_2,\wtd\cV_2)\|_{\UI}
\le\frac{2\,\|D\|_2\,}{\max\{\sigma_r,\tilde\sigma_r\}}
\big\|\big[\sin\Theta(\cX,\wtd\cX)\big]_{\bestr}\big\|_{\UI},
\end{align*}
consequently a slightly sharper bound on $\min_{Y\in\bbX}\|\wtd X-Y\|_{\UI}$ than \eqref{ineq:main} by replacing
$\sin\Theta(\cX,\wtd\cX)$ there with $\big[\sin\Theta(\cX,\wtd\cX)\big]_{\bestr}$.
\end{remark}

\section{Numerical examples}\label{sec:num}
In this section, we conduct  numerical experiments
to demonstrate the effectiveness of the main result in this paper.

Let $M=\frac 1{\sqrt n}\,\hadm(n)$, where $\hadm$ is a  MATLAB function that generates a Hadamard matrix, which
is orthogonal. Let
$$
X_{\diamond}=M_{ (:,1:k)},\quad
\wtd X_{\diamond}=\sqrt{1-\delta^2}\,M_{(:,1:k)}Q_1+\delta M_{(:,k+1:2k)}Q_2,
$$
where $\delta$ is a parameter to control the distance between $\cX=\cR(X_{\diamond})$
and $\wtd\cX=\cR(\wtd X_{\diamond})$, $Q_1, Q_2\in\bbO^{k\times k}$ generated by
MATLAB's built-in functions $\orth$ and $\randn$ as $\orth(\randn(k))$. It can be calculated that
$X_{\diamond}^{\T}\wtd X_{\diamond}=\sqrt{1-\delta^2} Q_1$ whose singular values are
$\sqrt{1-\delta^2}$ of multiplicity $k$ and, hence,  the $k$ canonical angles $\theta_i$
between $\cX$ and $\wtd\cX$ are all the same with $\cos\theta_i=\sqrt{1-\delta^2}$, yielding
\begin{equation}\label{eq:sin-value}
\|\sin\Theta(\cX,\wtd\cX)\|_2=\delta,\,
\|\sin\Theta(\cX,\wtd\cX)\|_{\F}=\sqrt k\,\delta,\,
\|\sin\Theta(\cX,\wtd\cX)\|_{\tr}= k\,\delta.
\end{equation}
They all go to $0$ as $\delta$ does, but as orthonormal basis matrices of $\cX$ and $\wtd\cX$,
respectively, $X_{\diamond}$ and $\wtd X_{\diamond}$ are nowhere near.

Our main result in this paper shows that any $D\in\bbR^{n\times k}$ such that $X^{\T}D\succ 0,\,\wtd X^{\T}D\succ 0$
can nail down particular orthonormal basis matrices $X$ of $\cX$ and $\wtd X$  of $\wtd\cX$,
respectively, and that ensures $X-\wtd X=O(\delta)$. In what follows we will first numerically demonstrate
the sharpness of this upper bound for the
matrix norms $\|\cdot\|_{\UI}=\|\cdot\|_2,\, \|\cdot\|_{\F}$, and $\|\cdot\|_{\tr}$, as
$\delta\to 0$.
Specifically, let
$X=X_{\diamond}UV^{\T}$ and $\wtd X=\wtd X_{\diamond}\wtd U\wtd V^{\T}$
where $UV^{\T}$ and $\wtd U\wtd V^{\T}$ are the orthogonal polar factors of $X_{\diamond}^{\T}D$
and $\wtd X_{\diamond}^{\T}D$, respectively. Consider
$n=96$, $k=5$, and
\begin{equation}\label{eq:D-fullrank}
D=
\begin{bmatrix}
    1 & 0 & 0 & 0 & 0\\
    0 & 1 & 0 & 0 & 0\\
    0 & 0 & 1 & 0 & 0\\
    0 & 0 & 0 & 1 & 0\\
    0 & 0 & 0 & 0 & 1\\
    \frac 6 {8n} &  \frac 6 {8n+1} &  \frac 6 {8n+2} &  \frac 6 {8n+3} &  \frac 6 {8n+4} \\
    \vdots & \vdots & \vdots & \vdots &  \vdots\\
    \frac n {8n} &  \frac n {8n+1} &  \frac n {8n+2} &  \frac n {8n+3} &  \frac n {8n+4}
\end{bmatrix}\textcolor{red}{.}
\end{equation}
With this $D$, we have, by Theorem~\ref{ineq:main},
\begin{equation}\label{eq:egs:bd1}
\|X-\wtd X\|_{\UI}\le\xi_{\UI}:=\left(\sqrt 2+\frac{2\sqrt 2\,\|D\|_2}{\sigma_k+\tilde\sigma_k}\right)
                \times\|\sin\Theta(\cX,\wtd\cX)\|_{\UI}.
\end{equation}
In Figure~\ref{fig1}, we present three plots, each of which contains
the upper bound $\xi_{\UI}$ in \eqref{eq:egs:bd1} and the exact $\|X-\wtd X\|_{\UI}$ for the three norms, respectively.
It is observed that the upper bounds are tight and indicative of the true difference $\|X-\wtd X\|_{\UI}$.

\begin{figure}[t]
\begin{center}
\includegraphics[height=0.20\textheight,width=0.328\textwidth]{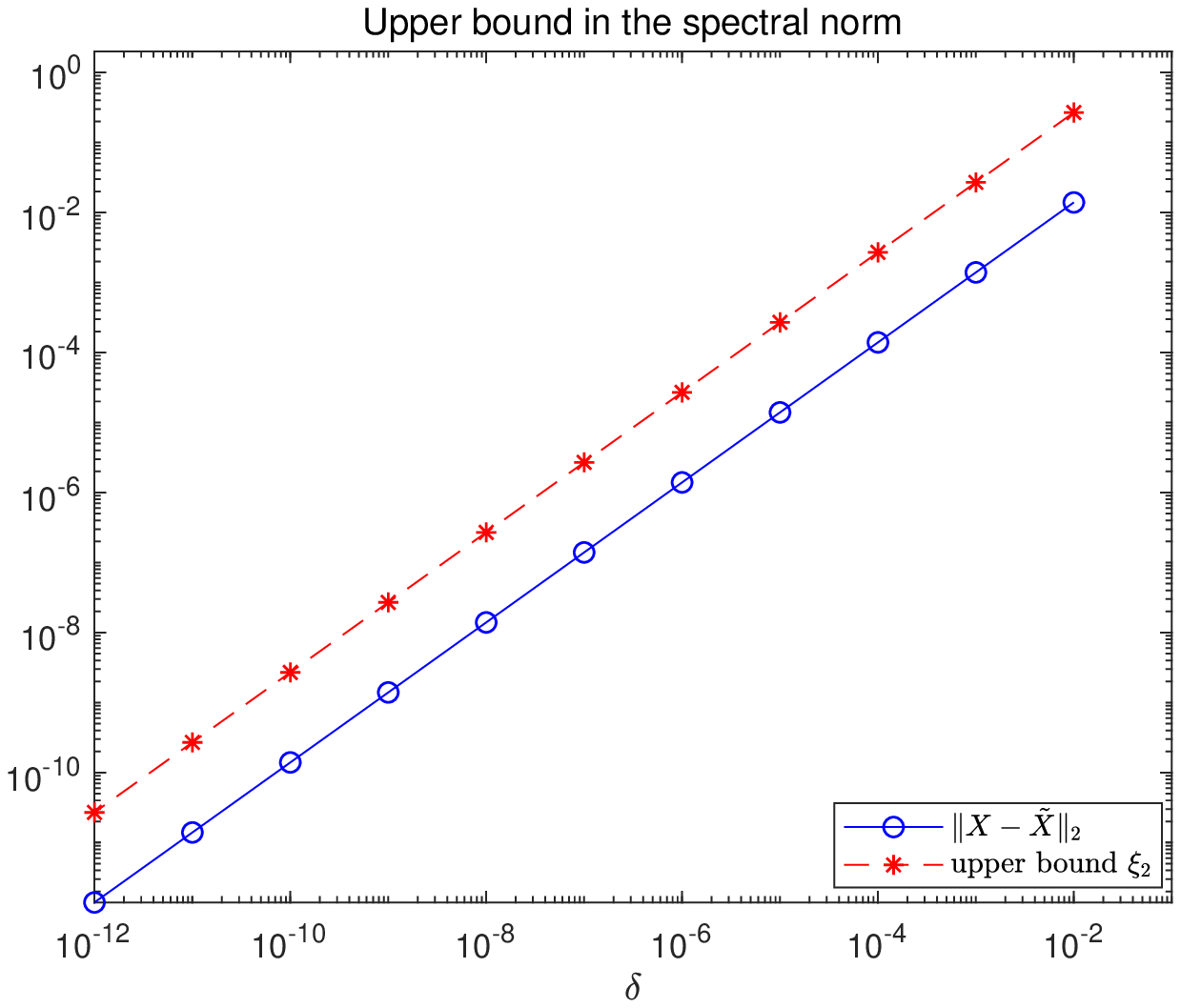}
\includegraphics[height=0.20\textheight,width=0.328\textwidth]{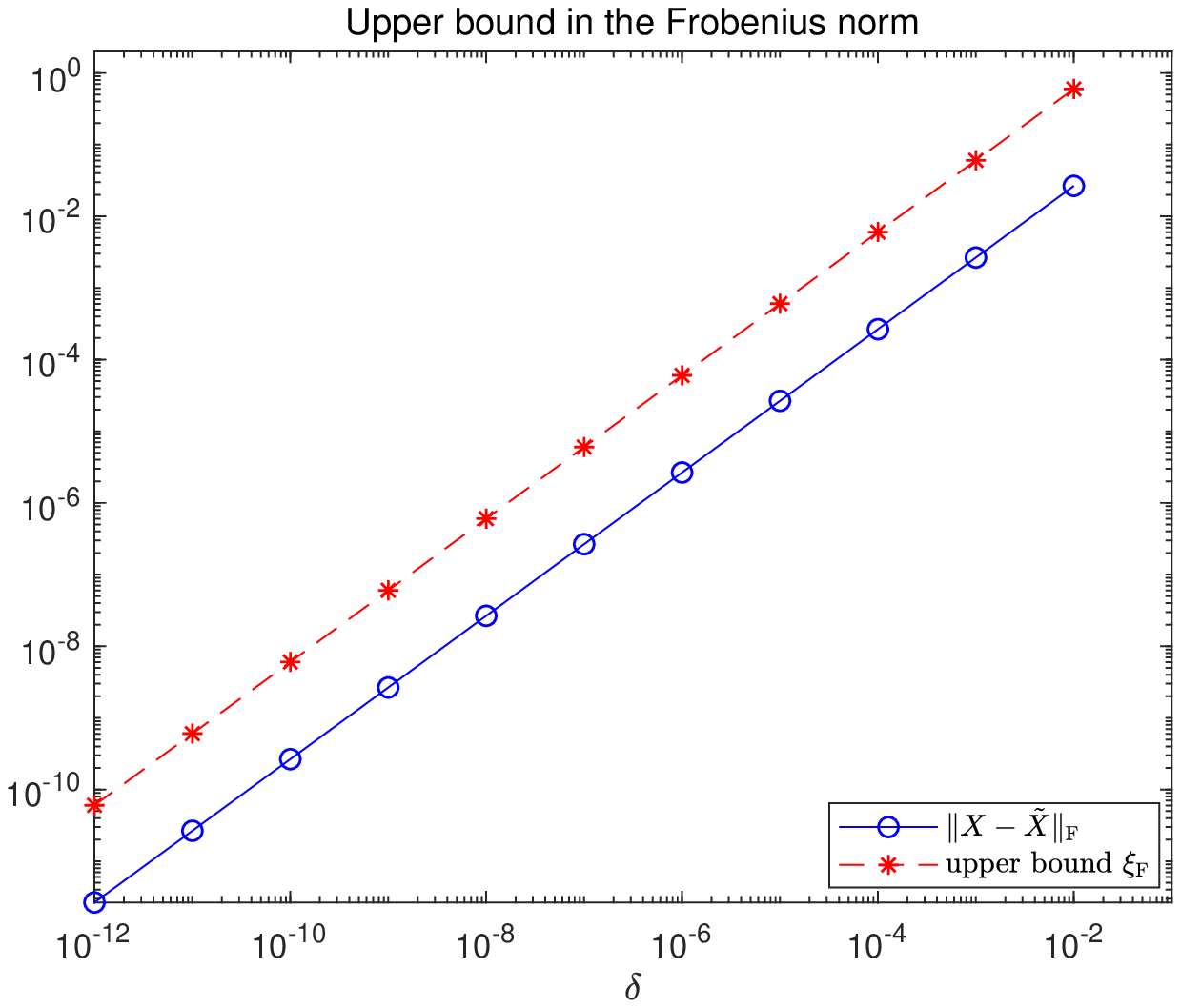}
\includegraphics[height=0.20\textheight,width=0.328\textwidth]{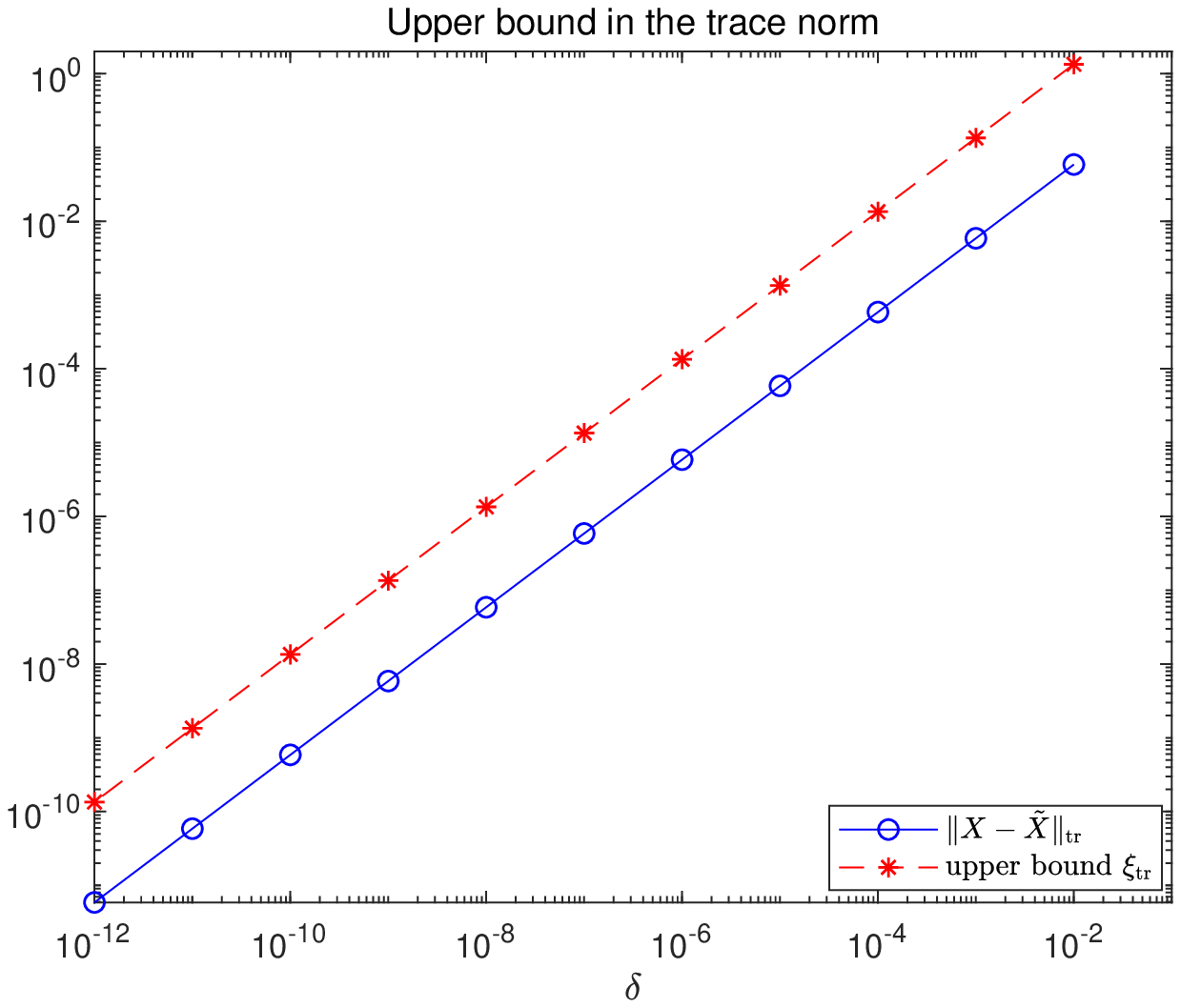}
\end{center}
\vspace{-10pt}
\caption{\small
         The full rank case: $r=k$ with $D$ as in \eqref{eq:D-fullrank}.
         Upper bound $\xi_{\UI}$ in \eqref{eq:egs:bd1} and the exact $\|X-\wtd X\|_{\UI}$
         as $\delta$ varies from $10^{-12}$ to $10^{-2}$.
         {\em Left:\/} the spectral norm; {\em Middle:\/} the Frobenius norm;
         {\em Right:\/} the trace norm.}
\label{fig1}
\end{figure}

Next, we consider the rank-deficient case: $\rank(X^{\T}D)=\rank(\wtd X^{\T}D)=:r<k$.
By Theorem~\ref{thm:main}, we have
\begin{subequations}\label{eq:def-xi-nofr}
\begin{equation}\label{eq:egs:bd2}
\min_{Y\in\bbX}\|\wtd X-Y\|_{\UI}\le\xi_{\UI},
\end{equation}
where for the
matrix norms $\|\cdot\|_{\UI}=\|\cdot\|_2,\, \|\cdot\|_{\F}$, and $\|\cdot\|_{\tr}$
\begin{align}
\xi_2&=\left(\sqrt 2+\sqrt{\dfrac{8\,\|D\|_2^2}{(\sigma_r+\tilde\sigma_r)^2}
       +\dfrac{4\,\|D\|_2^2}{\max\{\sigma_r^2,\tilde\sigma_r^2\}}}+\frac{4\,\|D\|_2}{\max\{\sigma_r,\tilde\sigma_r\}}\right)
         \,\|\sin\Theta(\cX,\wtd\cX)\|_2, \label{eq:def-xi-nofr:2}\\
\xi_{\F}&=\left(\sqrt 2+\frac{2\sqrt 2\,\|D\|_2}{\sigma_r+\tilde\sigma_r}+\frac{4\,\|D\|_2}
          {\max\{\sigma_r,\tilde\sigma_r\}}\right)\times\|\sin\Theta(\cX,\wtd\cX)\|_{\F}, \label{eq:def-xi-nofr:F}\\
\xi_{\tr}&=\left(\sqrt 2+\frac{2\sqrt 2\,\|D\|_2}{\sigma_r+\tilde\sigma_r}+\frac{(2\sqrt{2}+4)\,\|D\|_2}
          {\max\{\sigma_r,\tilde\sigma_r\}}\right)\times\|\sin\Theta(\cX,\wtd\cX)\|_{\tr}. \label{eq:def-xi-nofr:tr}
\end{align}
\end{subequations}

For $r=k-1$, we simply take the same $D$
in \eqref{eq:D-fullrank} but reset
its last column to $0$. By Lemma~\ref{le:mp}, there are only two $X$ that satisfy $X^{\T}D\succeq 0$ and
$\cR(X)=\cX:=\cR(X_{\diamond})$:
$$
X=X_{\diamond}U_{(:,1:k-1)}V_{(:,1:k-1)}^{\T}+X_{\diamond}U_{(:,k)}V_{(:,k)}^{\T}, \quad
X_-=X_{\diamond}U_{(:,1:k-1)}V_{(:,1:k-1)}^{\T}-X_{\diamond}U_{(:,k)}V_{(:,k)}^{\T}.
$$
The same can be said for $\wtd X$ that satisfies $\wtd X^{\T}D\succeq 0$ and $\cR(\wtd X)=\wtd\cX:=\cR(\wtd X_{\diamond})$.
Hence
\begin{equation}\label{eq:egs:r=k-1}
\min_{Y\in\bbX}\|\wtd X-Y\|_{\UI}=\min\{\|\wtd X-X\|_{\UI}, \|\wtd X-X_-\|_{\UI}\}.
\end{equation}
In Figure~\ref{fig2}, we again present three plots, each of which contains
the upper bound $\xi_{\UI}$ and the exact quantity in \eqref{eq:egs:r=k-1} for the three norms, respectively.
We observe similar behaviors to those in Figure~\ref{fig1} for the full-rank case.

\begin{figure}
\begin{center}
\includegraphics[height=0.20\textheight,width=0.328\textwidth]{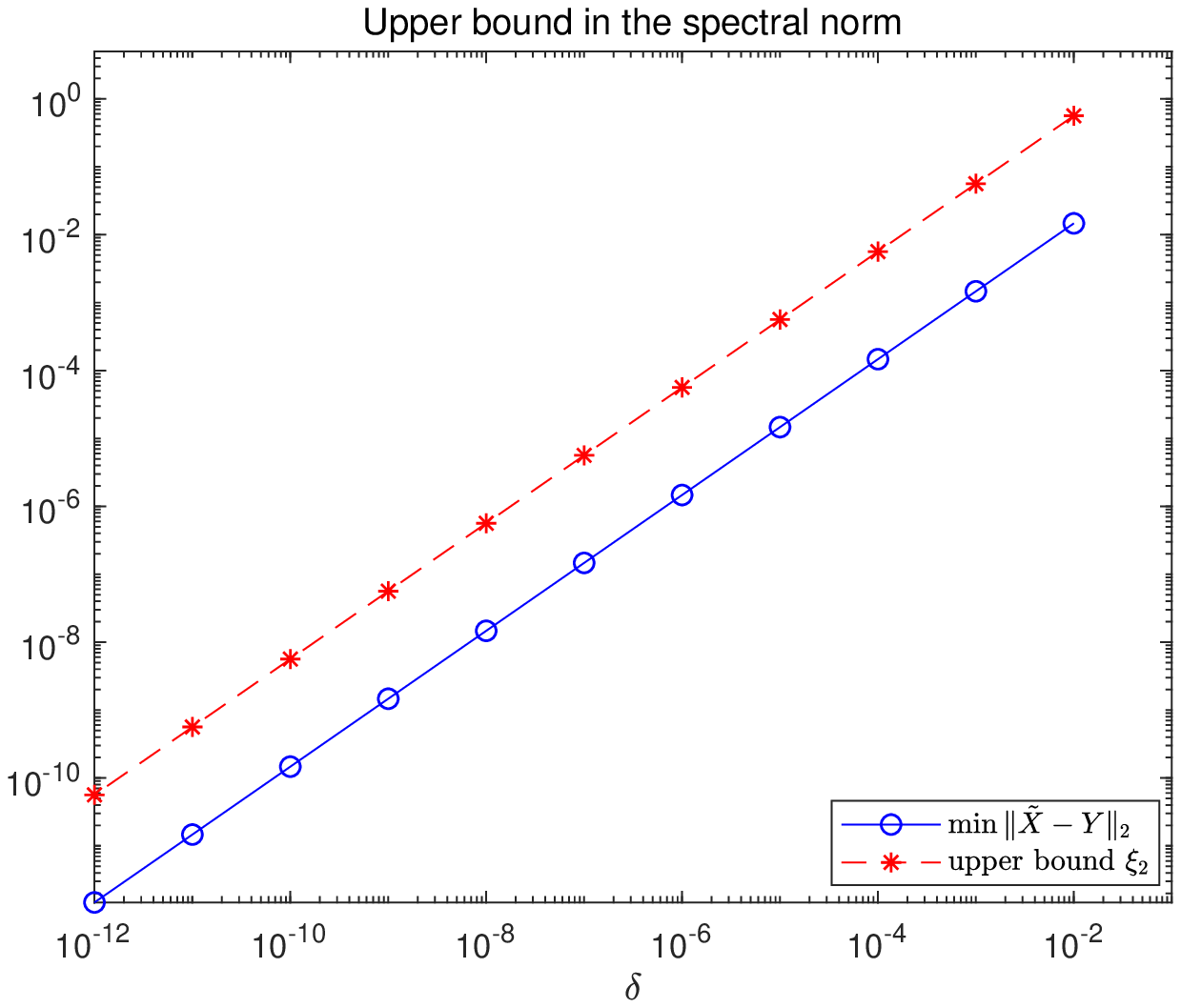}
\includegraphics[height=0.20\textheight,width=0.328\textwidth]{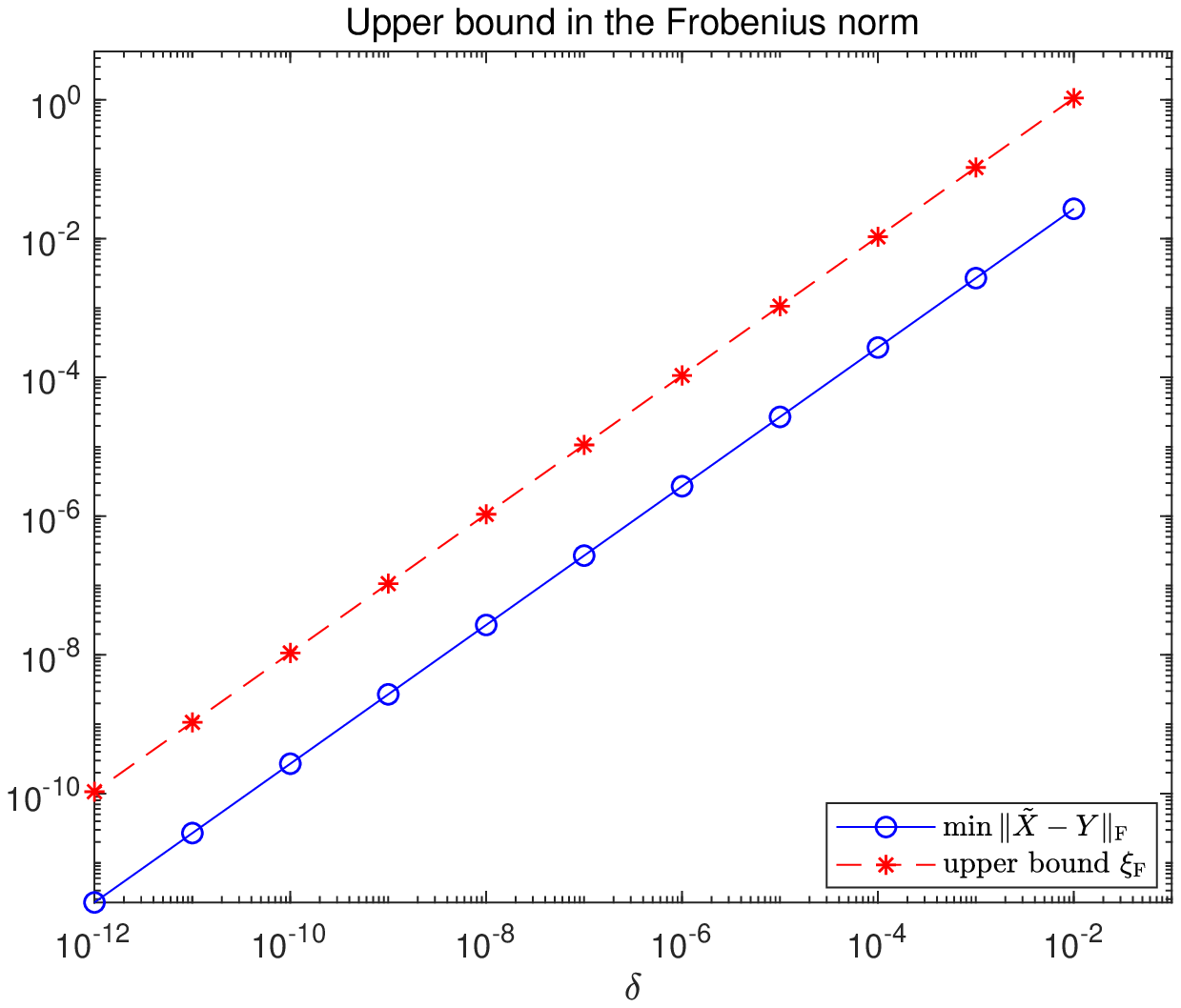}
\includegraphics[height=0.20\textheight,width=0.328\textwidth]{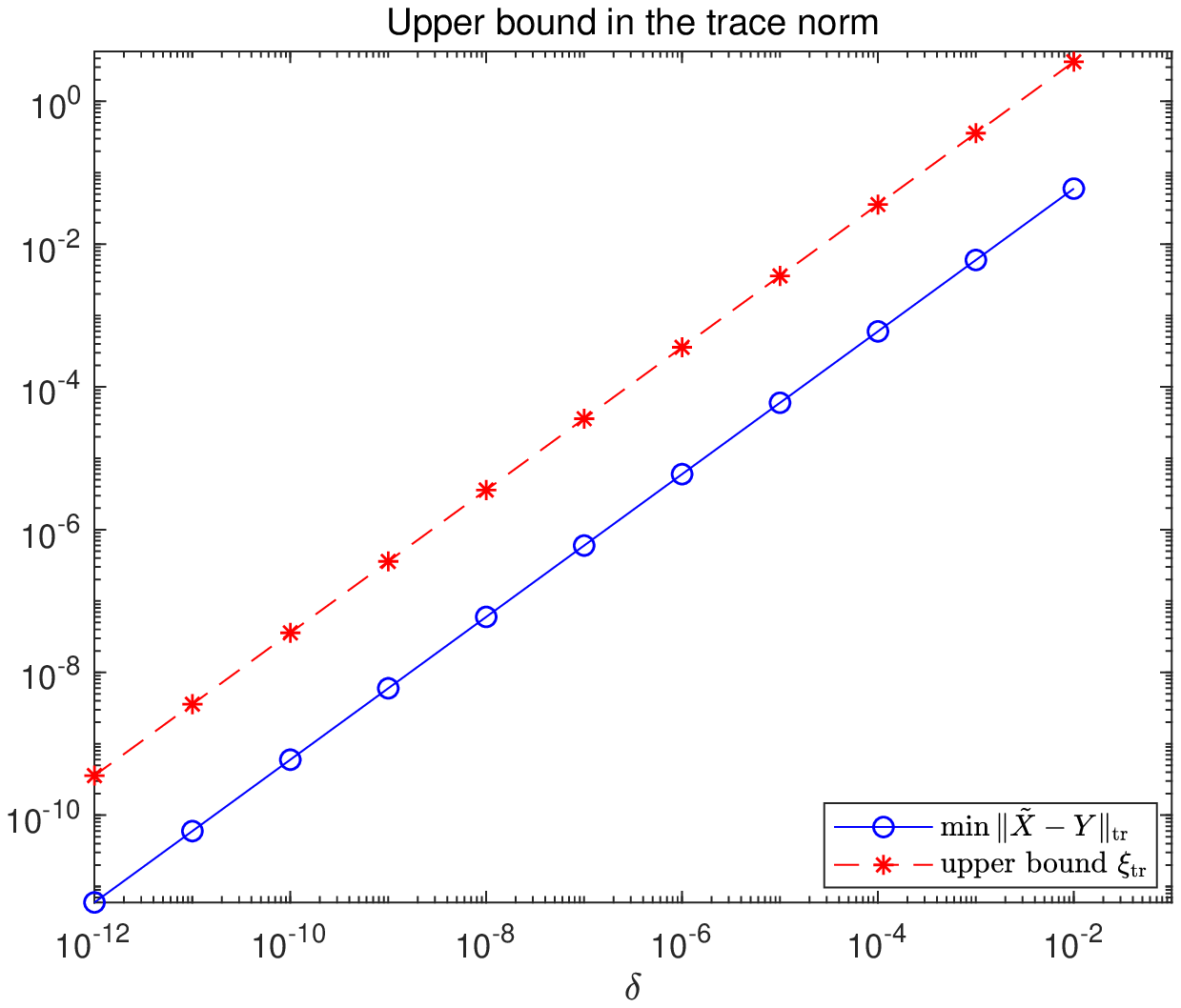}
\end{center}
\vspace{-10pt}
\caption{\small
         The rank-deficient case: $r=k-1$ by resetting the last column of $D$ in \eqref{eq:D-fullrank} to $0$.
         Upper bound $\xi_{\UI}$ in \eqref{eq:def-xi-nofr} and the exact $\min\|\wtd X-Y\|_{\UI}$ in
         \eqref{eq:egs:r=k-1}
         as $\delta$ varies from $10^{-12}$ to $10^{-2}$.
         {\em Left:\/} the spectral norm; {\em Middle:\/} the Frobenius norm;
         {\em Right:\/} the trace norm.}
\label{fig2}
\end{figure}

When $r<k-1$, the set $\bbX=\left\{X_{\sss\diamond}U_1V_1^{\T}+X_{\sss\diamond}U_2WV_2^{\T}\,:\, W\in\bbO^{(k-r)\times(k-r)}\right\}$
contains infinitely many elements. Fortunately,  for the Frobenius norm, we have
\begin{align*}
    \min_{Y\in\bbX}\|\wtd X-Y\|_{\F}^2&=\min_{W\in\bbO^{(k-r)\times(k-r)}}\left\|\wtd X-X_{\sss\diamond}U
                                         \begin{bmatrix}
                                            I_r &  \\
                                                &  W
                                            \end{bmatrix}V^{\T}\right\|_{\F}^2\\
                                       &=\min_{W\in\bbO^{(k-r)\times(k-r)}}\left\|\wtd XV-X_{\sss\diamond}U
                                         \begin{bmatrix}
                                            I_r &  \\
                                                &  W
                                         \end{bmatrix}\right\|_{\F}^2\\
                                       &=\min_{W\in\bbO^{(k-r)\times(k-r)}}\left\|
                                       \begin{bmatrix}
                                           \wtd XV_1, &\wtd XV_2
                                       \end{bmatrix}-\begin{bmatrix}
                                                        X_{\sss\diamond}U_1, & X_{\sss\diamond}U_2W
                                                      \end{bmatrix}
                                        \right\|_{\F}^2\\
                                       &=\|\wtd XV_1-X_{\sss\diamond}U_1\|_{\F}^2+\min_{W\in\bbO^{(k-r)\times(k-r)}}
                                       \|\wtd XV_2-X_{\sss\diamond}U_2W\|_{\F}^2,
\end{align*}
where the last term can be expressed as
\begin{equation*}
    \min_{W\in\bbO^{(k-r)\times(k-r)}}\|\wtd XV_2-X_{\sss\diamond}U_2W\|_{\F}^2=2(k-r)^2-
    \max_{W\in\bbO^{(k-r)\times(k-r)}}\tr(W^{\T}U_2^{\T}X_{\sss\diamond}^{\T}\wtd XV_2).
\end{equation*}
By von Neumann's trace inequality~\cite[section II.3.1]{stsu:1990}, the optimizer $W_{\sss\opt}$ for
\[
\max_{W\in\bbO^{(k-r)\times(k-r)}}\tr(W^{\T}U_2^{\T}X_{\sss\diamond}^{\T}\wtd XV_2)
\]
is the orthogonal polar factor of $U_2^{\T}X_{\sss\diamond}^{\T}\wtd XV_2$, and finally
\begin{equation}\label{eq:opt-form}
\min_{Y\in\bbX}\|\wtd X-Y\|_{\F}=\|\wtd X-Y_{\opt}\|_{\F},
\end{equation}
where $Y_{\opt}=X_{\sss\diamond}U_1V_1^{\T}+X_{\sss\diamond}U_2W_{\opt}V_2^{\T}$.
For norms other than $\|\cdot\|_{\F}$, it is not easy to calculate $\min\|\wtd X-Y\|_{\UI}$ subject to $Y\in\bbX$. For those norms,
we can tightly bound the exact $\min_Y\|\wtd X-Y\|_{\UI}$ tightly with the help of this $Y_{\opt}$ as follows:
\begin{subequations} \label{eq:bound-by-eql}
    \begin{align}
\frac 1{\sqrt k}\|\wtd X-Y_{\opt}\|_{\F}\le&\min_{Y\in\bbX}\|\wtd X-Y\|_2\le\|\wtd X-Y_{\opt}\|_2, 
        \label{eq:bound-by-eql-2} \\
\|\wtd X-Y_{\opt}\|_{\F}\le&\min_{Y\in\bbX}\|\wtd X-Y\|_{\tr}\le\|\wtd X-Y_{\opt}\|_{\tr}, 
        \label{eq:bound-by-eql-ncl}
    \end{align}
\end{subequations}
because $Y_{\opt}\in\bbX$ and for any $Y\in\bbX$,
$$
\frac 1{\sqrt k}\|\wtd X-Y\|_{\F}\le\|\wtd X-Y\|_2, \quad
\|\wtd X-Y\|_{\F}\le\|\wtd X-Y\|_{\tr}.
$$
We plot in Figure~\ref{fig13} upper bounds $\xi_{\UI}$ for $\min_Y\|\wtd X-Y\|_{\UI}$ the three norms,
their lower and upper bounds in \eqref{eq:bound-by-eql}.
It is noted  that
our upper bounds $\xi_{\UI}$ are again very good and go to $0$ at the same rates as the true ones.
\begin{figure}
\begin{center}
\includegraphics[height=0.20\textheight,width=0.328\textwidth]{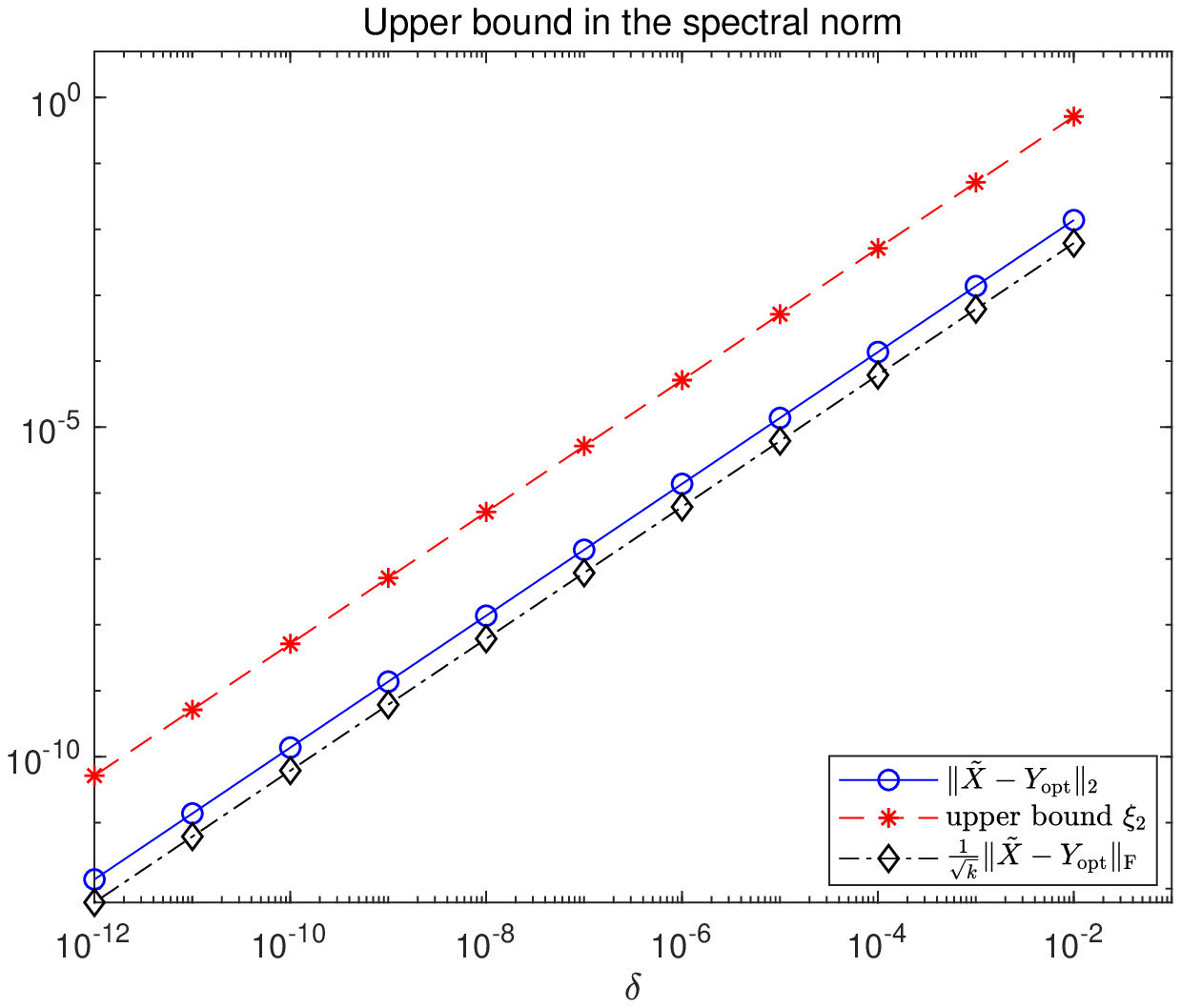}
\includegraphics[height=0.20\textheight,width=0.328\textwidth]{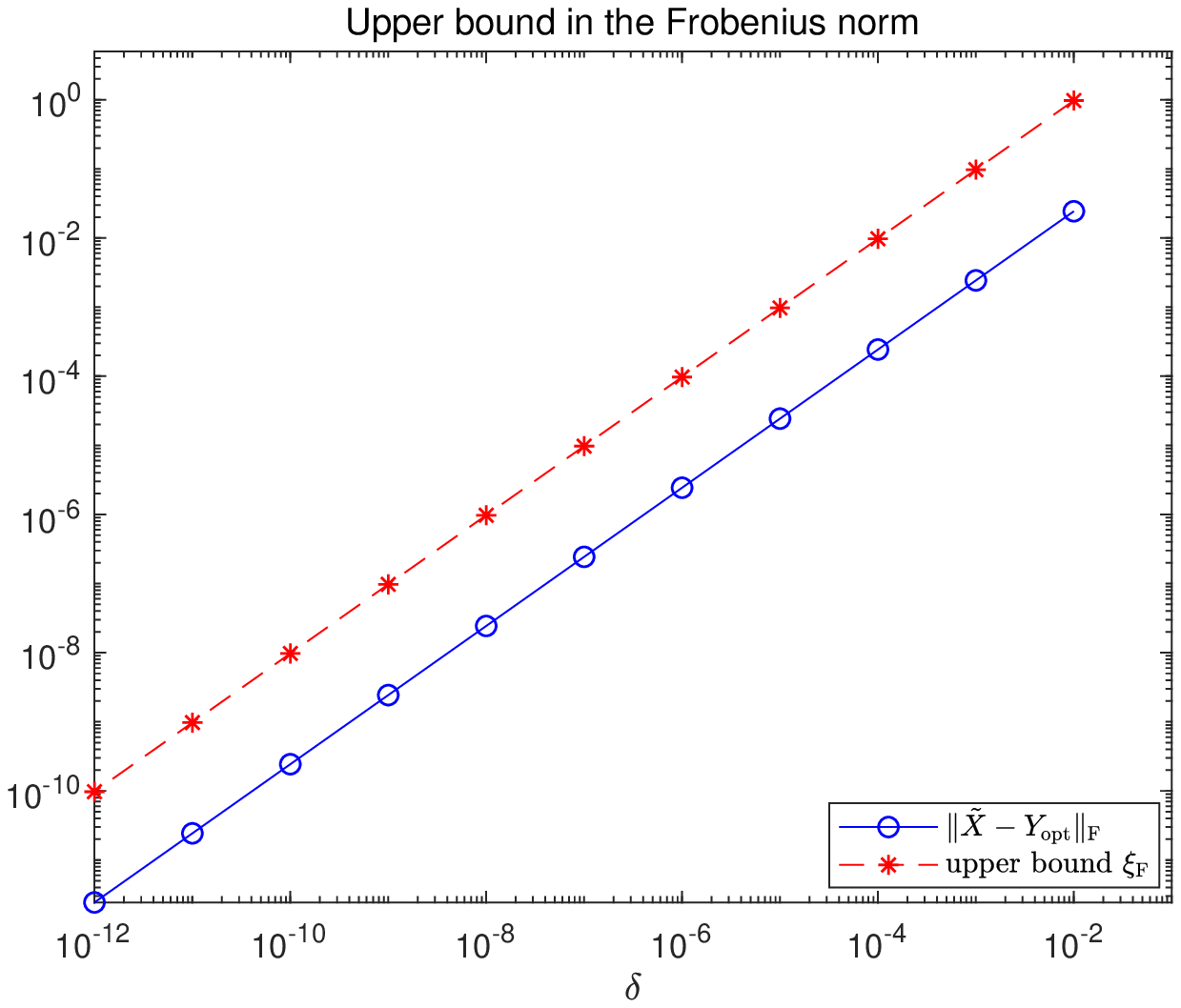}
\includegraphics[height=0.20\textheight,width=0.328\textwidth]{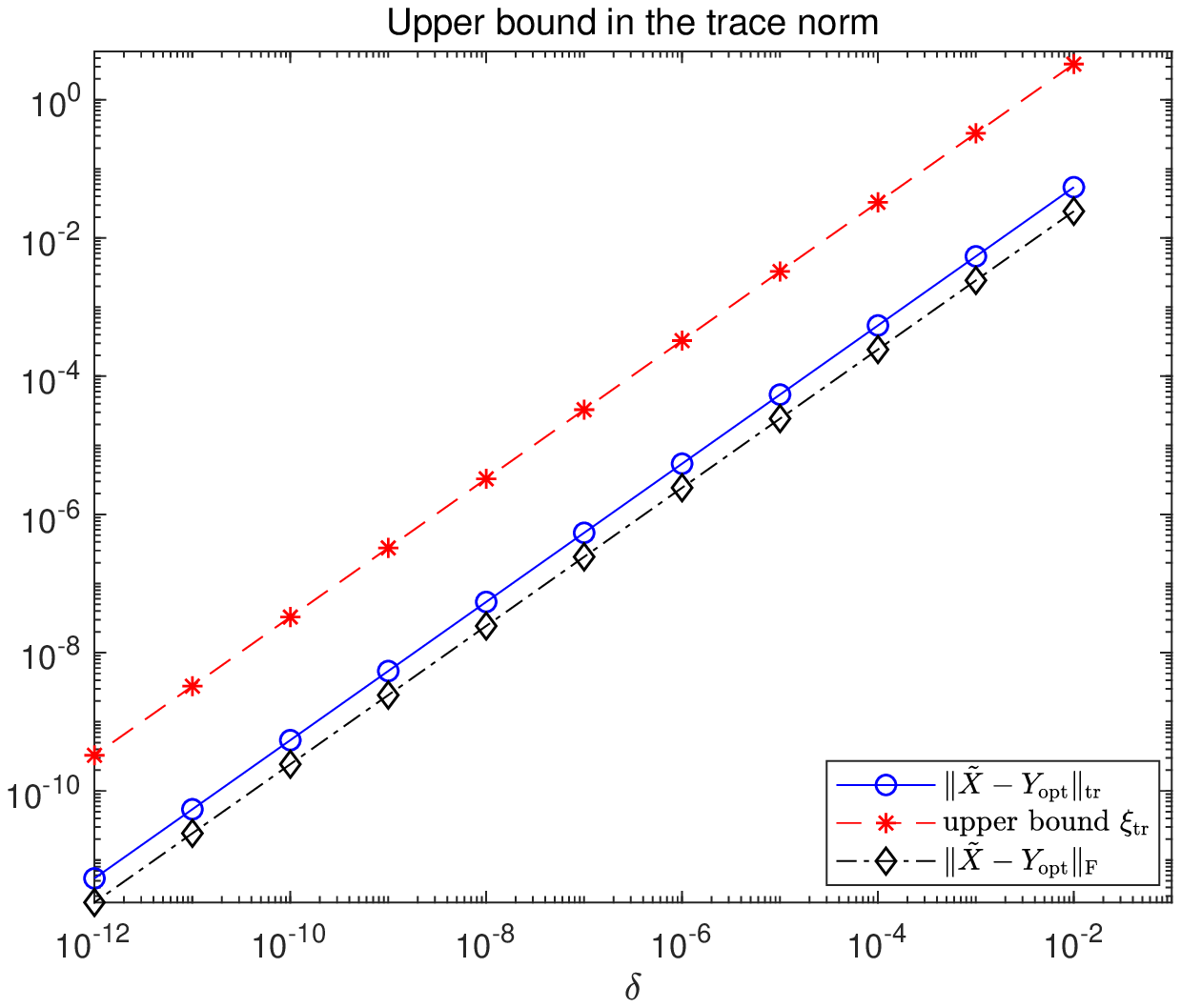}
\end{center}
\vspace{-10pt}
\caption{The rank-deficient case: $r=k-2$ by resetting the last two column of $D$ in \eqref{eq:D-fullrank} to $0$.
$\|\wtd X-Y_{\opt}\|_{\UI}$,  $\xi_{\UI}$ in \eqref{eq:def-xi-nofr} and bounds by \eqref{eq:bound-by-eql}.
         {\em Left:\/} the spectral norm; {\em Middle:\/} the Frobenius norm;
         {\em Right:\/} the trace norm.
}
\label{fig13}
\end{figure}

\section{Concluding remarks}\label{sec:col}
Let $\cX$ be a $k$-dimensional subspace of $\bbR^n$, and $D\in\bbR^{n\times k}$,
and let $X$ be an orthonormal basis matrix of $\cX$.
If $X^{\T}D\succ 0$, then $X$ is unique among all
orthonormal basis matrices of $\cX$. However if $X^{\T}D\succeq 0$ and $r=\rank(X^{\T}D)<k$, then
$X$ can be any one from set $\bbX$ in \eqref{eq:xset} of orthonormal basis matrices of $\cX$.
These results are recently obtained in \cite{luli:2022,wazl:2022}. In this paper, however,
we study how quantitatively $X$ changes as $\cX$ changes in both cases.

Specifically, suppose that $\cX$ is changed to $\wtd\cX$ and their difference is measured by
$\sin\Theta(\cX,\wtd\cX)$, the
sines of their canonical angles, and let $\wtd X$ be an orthonormal basis matrix of $\wtd\cX$.
In the case when both $X^{\T}D\succ 0,\,\wtd X^{\T}D\succ 0$, we
established upper bounds on $\|X-\wtd X\|$ in terms of $\|\sin\Theta(\cX,\wtd\cX)\|$ for the spectral, Frobenius
and, more generally, any unitarily invariant norm,
while in the case when both $X^{\T}D\succeq 0,\,\wtd X^{\T}D\succeq 0$ and also $\rank(X^{\T}D)=\rank(\wtd X^{\T}D)<k$,
our bounds are essentially on the Hausdorff distances of two sets $\bbX$ and $\wtd\bbX$ (see \eqref{eq:haus-dist}).
Numerical tests are conducted to  demonstrate the sharpness of our bounds.

The result in this paper can be used to understand the convergence of the SCF iteration in the NEPv approach
to solve maximization problems over the Stiefel manifold whose objective functions contain and increase with $\tr(X^{\T}D)$
\cite{luli:2022,wazl:2022,zhys:2020}
and even to assess approximation accuracy during SCF iterations.

Although our analysis so far focuses on the real number field, all developments can be
extended to the complex number field straightforwardly. To that end, only a few minor modifications are needed, namely,
replacing all $\bbR$ by $\bbC$, all matrix/vector transposes by complex conjugate transposes and $\tr(\,\cdot\,)$ by
$\re\left(\tr(\,\cdot\,)\right)$, where $\re(\,\cdot\,)$ takes the real part of a complex number.


{\small
\def\noopsort#1{}\def\l{\char32l}\def\v#1{{\accent20 #1}}
  \let\^^_=\v\def\hbk{hardback}\def\pbk{paperback}

}

\end{document}